\newcommand{\ceil}[1]{\lceil #1 \rceil}
\newcommand{\ignore}[1]{}
\newcommand{\Z}{\mathbb{Z}}
\newcommand{\E}{\mathbb{E}}
\newtheorem{theorem}{Theorem}[section]
\newtheorem{lemma}[theorem]{Lemma}
\newtheorem{startseq}{Starting Sequence}
\newtheorem{rul}{Rule}
\newtheorem{corollary}[theorem]{Corollary}
\newtheorem{conjecture}[theorem]{Conjecture}
\theoremstyle{definition}
\newtheorem{definition}[theorem]{Definition}
\newtheorem*{remark}{Remark}
\theoremstyle{remark}
\numberwithin{equation}{section}
\numberwithin{case}{theorem}
\title{Weakly Consecutive Sequences}
\author{Thomas Garrison, Chris Seiler, Andrew Knowles}
\date{October 2023} 
\begin{document}

\maketitle

\begin{abstract}
A \emph{weakly consecutive sequence (WCS)} is a permutation $\sigma$ of $\{1, \ldots, k\}$ such that if an integer $d$ divides $\sigma(i)$, then $d$ also divides $\sigma(i \pm d)$ insofar as these are defined. The structure of weakly consecutive sequences is surprisingly rich, and it is difficult to find a formula for the number $N(k)$ of WCS's of length $k$. However, for a given $k$ we describe four starting sequences, to each of which we can apply three \emph{rules} or operations to generate new WCS's. We conjecture that any WCS can be constructed by applying these rules, which depend in an intricate way on the primality of $k$ and surrounding integers. We find bounds for $N(k)$ by analyzing these rules.
\end{abstract}
\section{Introduction}
\begin{definition} \label{def:weak_consec}
    Let $\sigma : [k] \to [k]$ be a permutation for a natural number $k$. We say $\sigma$ is \emph{weakly consecutive} if, for all $i,j \in [k]$ and integers $m$, if $m \mid \sigma(i)$ and $m \mid (i - j)$, then $m \mid \sigma(j)$. Let $N(k)$ be the number of distinct weakly consecutive sequences of length $k$.
\end{definition}

Intuitively, a weakly consecutive sequence is a sequence where multiples of numbers are preserved in
order. That is, everything an even distance from the number 2 is also even; everything distance 3
away from 3 is also divisible by 3, etc. We call these sequences weakly consecutive because they share this property with the consecutive sequence $1,\ldots,k$.

For example, all weakly consecutive sequences of length up to $8$ are shown in Table \ref{tab:WCS}.
\begin{table}[htbp]
    \centering
    \begin{tabular}{c|l}
    Length & Sequences \\ 
    \hline
    1 & $(1)$ \\
    2 & $(1,2)$, $(2,1)$ \\
    3 & $(1,2,3)$, $(3,2,1)$ \\
    4 & $(1,2,3,4)$, $(1,4,3,2)$, $(2,3,4,1)$, $(4,3,2,1)$ \\
    5 & $(1,2,3,4,5)$, $(1,4,3,2,5)$, $(5,2,3,4,1)$, $(5,4,3,2,1)$ \\
    6 & $(1,2,3,4,5,6)$, $(1,6,5,4,3,2)$, $(2,3,4,5,6,1)$, $(6,5,4,3,2,1)$ \\
    7 & $(1,2,3,4,5,6,7)$, $(1,6,5,4,3,2,7)$, $(7,2,3,4,5,6,1)$, $(7,6,5,4,3,2,1)$ \\
    8 & $(1,2,3,4,5,6,7,8)$, $(1,2,3,8,5,6,7,4)$, $(4,7,6,5,8,3,2,1)$, $(8,7,6,5,4,3,2,1)$
\end{tabular}
    \caption{Weakly consecutive sequences of length up to $8$}
    \label{tab:WCS}
\end{table}

The values of $N(k)$ for $k=1, \dots, 100$ are shown in Table \ref{tab:N(k)}.
\begin{table}
    \centering
    \begin{tabular}{>{\bfseries}c||c|c|c|c|c|c|c|c|c|c}
        + & \bfseries{ 1 } & \bfseries{ 2 } & \bfseries{ 3 } & \bfseries{ 4 } & \bfseries{ 5 } & \bfseries{ 6 } & \bfseries{ 7 } & \bfseries{ 8 } & \bfseries{ 9 } & \bfseries{ 10 } \\
        \hline
        \hline
        0 & 1 & 2 & 2 & 4 & 4 & 4 & 4 & 4 & 8 & 16 \\ 
        \hline
        10 & 16 & 4 & 4 & 2 & 2 & 8 & 8 & 8 & 8 & 4 \\ 
        \hline
        20 & \textbf{12} & 16 & 16 & 2 & 4 & 4 & 8 & 16 & 16 & 8 \\ 
        \hline
        30 & 8 & 8 & 8 & 16 & 16 & 8 & 8 & 4 & 4 & 8 \\ 
        \hline
        40 & 8 & 8 & 8 & 4 & 4 & 8 & 8 & 2 & 4 & 4 \\ 
        \hline
        50 & 4 & 8 & 8 & 4 & 4 & 2 & 6 & 8 & 8 & 4 \\ 
        \hline
        60 & 4 & 2 & 2 & 4 & 4 & 8 & 8 & 4 & 4 & 8 \\ 
        \hline
        70 & 8 & 8 & 8 & 4 & 4 & 4 & 4 & 8 & 8 & 4 \\ 
        \hline
        80 & \textbf{40} & 32 & 32 & 8 & 8 & 8 & 8 & 16 & 16 & 8 \\ 
        \hline
        90 & 8 & 8 & 8 & 8 & 8 & 8 & 8 & 4 & 4 & 8
    \end{tabular}
    \caption{Values of $N(k)$, $1 \leq k \leq 100$. The value in the row labeled $10 i$ and the column labeled $j$ is $N(10 i + j)$. Note that only $N(21) = 12$ and $N(81) = 40$ are not powers of $2$ (as shown in bold).}
    \label{tab:N(k)}
\end{table}

Observing these early sequences and values of $N(k)$ raises questions about the generation and classification of weakly consecutive sequences. For example, it is easy to understand at a glance that $(1,2,3,4)$ and $(4,3,2,1)$ are weakly consecutive, but $(1,4,3,2)$ and $(1,2,3,8,5,6,7,4)$ are less obvious. Note that $N(k)$ is usually a power of $2$, but not always: we have $N(21) = 12$ and $N(81) = 40$. In this paper, we will show that $N(k)$ is an unbounded function of $k$:
\begin{theorem}[see Corollary \ref{cor:nk} below]
  Fix an $n \in \mathbb{Z}^+$. There exists a $k \leq 2^{n^{2 \lg(\lg (n)) (1 + o(1))}}$ with $N(k) \geq n$.
\end{theorem}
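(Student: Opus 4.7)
The plan is to exhibit, for every $n \in \mathbb{Z}^+$, an explicit $k$ satisfying the required bound together with at least $n$ distinct weakly consecutive sequences of length $k$. The bound $k \leq 2^{n^{2 \lg \lg n (1+o(1))}}$ is extremely generous, so I would aim for a crude but explicit construction rather than an optimized one, and expect to obtain a bound on $k$ much stronger than what is claimed.

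My first step would be to choose $k = p_1 p_2 \cdots p_m$, the $m$-th primorial, with $m = \lceil \lg n \rceil$. The small-case data $N(2)=2$, $N(6)=4$, $N(30)=8$ in Table \ref{tab:N(k)} fits the pattern $N(p_1 \cdots p_m) \geq 2^m$, which I would try to establish. Given this, by the prime number theorem $p_m \sim m \log m$ and hence $\log_2 k = O(m \log m) = O(\lg n \cdot \lg \lg n)$, comfortably below the theorem's bound.

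To prove $N(k) \geq 2^m$ for the primorial $k$, I would produce $2^m$ candidate WCSs indexed by subsets $S \subseteq \{1, \ldots, m\}$. Starting from the identity sequence $(1, 2, \ldots, k)$, for each prime $p_i$ I would try to identify a canonical involution $\phi_i$---presumably one of the three rules from the abstract specialized to the prime $p_i$---that preserves the WCS property and is ``localized'' to the arithmetic progression of multiples of $p_i$. The sequences $\phi_S := \prod_{i \in S} \phi_i$ applied to the identity should then be pairwise distinct, as the position of $p_i$ (or of $2 p_i$) should distinguish $\phi_S$ from $\phi_{S \triangle \{i\}}$.

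The main obstacle I anticipate is verifying that the involutions $\phi_i$ for different primes commute and jointly preserve weak consecutiveness. Because the divisibility condition applies to \emph{every} modulus $d$, including composite ones, a modification associated with $p_i$ may inadvertently violate a constraint at $d = p_i^2$ (note $p_i^2 \leq k$ for small $i$) or $d = p_i p_j$. I would handle this by a case analysis keyed to the factorization of $d$, using the Chinese Remainder Theorem to reduce multi-prime constraints to products of single-prime constraints placed by the $\phi_i$. If the three authorial rules do not cleanly give $2^m$ independent choices on the primorial, I would fall back to restricting to a subset of ``safe'' primes for which independence is easy to verify; this would weaken the rate of growth of $N(k)$ in $m$, but given the huge slack in the stated bound it should still easily establish the theorem.
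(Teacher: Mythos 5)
There is a genuine gap: the central claim of your construction, $N(p_1\cdots p_m)\ge 2^m$ for the primorial, is unsupported and almost certainly false, and the numerical evidence you cite is coincidental. The mechanism the paper has for producing many WCS's of one length is the Prime Power Swap, which exchanges $p^c$ and $p^{c-1}$ and is valid only when $k$ lies in the narrow window $p^c \le k < p^c+p^{c-1}$ (equivalently, there are exactly $p$ multiples of $p^{c-1}$ up to $k$). This is a \emph{multiplicative} condition on where $k$ sits relative to prime powers, not a divisibility condition on $k$; whether $p_i \mid k$ is irrelevant. Indeed $N(30)=8$ not because $30=2\cdot3\cdot5$ but because $27\le 30<36$ permits the swap of $27$ and $9$, $31$ is prime so the $1$-Inversion applies, and reversal doubles the count; and the pattern already breaks at $k=210$, which lies in no interval $[p^c,p^c+p^{c-1})$ at all and so admits no Prime Power Swaps (compare also $N(24)=2$ and $N(60)=4$ in Table \ref{tab:N(k)} for highly composite $k$). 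There is likewise no known family of ``involutions localized to the multiples of $p_i$'' that preserve weak consecutiveness; the structure of WCS's is far more rigid than that, which is the whole difficulty of the problem.

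The paper's actual route is to bound $\#I(k)$, the number of pairs $(p,c)$ with $k\in[p^c,p^c+p^{c-1})$, since $N(k)\ge 2^{\#I(k)}$. Forcing one $k$ into $n$ such windows simultaneously amounts to making $\log k/\log p_i$ nearly an integer for $n$ different primes $p_i$ with error below $\log(1+1/p_i)/\log p_i$, and the paper achieves this by taking $k=p_{2n}^C$ (or $p_{2n}^C+p_{2n}^{C-1}$) with $C$ supplied by simultaneous Dirichlet approximation. The price is that $C$ can be as large as $\varepsilon^{-(2n-1)}$, which is why $k$ ends up doubly exponential in $n$. Your expectation of beating the stated bound by a wide margin should itself have been a warning sign: the paper notes that $\#I(k)\le\lg k$, so any $k$ realizing $n$ independent Prime Power Swaps must satisfy $k\ge 2^n$, and no polynomially small or primorial-sized $k$ can work via this mechanism. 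To repair the argument you would need to abandon the primorial choice of $k$ and instead solve the simultaneous approximation problem (or find some genuinely new source of WCS's not among the paper's rules).
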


\subsection{Starting sequences and rules}
For a given length $k$, we have found up to four types of starting sequence, to which we can then apply three operations, or \emph{rules,} in order to generate new weakly consecutive sequences, depending on conditions involving $k$. Briefly, the starting sequences are as follows:
\begin{enumerate}
    \item Consecutive (for any $k$): $(1,\dots,k)$

    Example: $(1,2,3,4)$
    \item 1-Inversion ($k=p-1$ for prime $p$): $(2,\dots,k,1)$

    Example: $(2,3,4,5,6,\mathbf{1})$
    \item Twice Twin Prime ($k=2p$ or $2p+1$ for prime $p,p+2$): Swap positions of $2, 2p$ and $p,p+2$, resulting in sequence $(1, 2p, \dots, p+2, p+1, p, \dots, 2)$ or $(1, 2p, \dots, p+2, p+1, p, \dots, 2, k)$

    Example: $(1,\mathbf{10},3,4,\mathbf{7},6,\mathbf{5},8,9,\mathbf{2})$ or $(1,\mathbf{10},3,4,\mathbf{7},6,\mathbf{5},8,9,\mathbf{2},11)$
    \item Twin Sophie Germain ($k=2p-1$ for prime $p,2p+1,p+2$): Move $2,1$ to the upper end and swap $p,p+2$, resulting in sequence $(3,\dots, p+2, p+1, p, \dots, k, 2, 1)$

    Example: $(3,4,5,6,7,8,9,10,\mathbf{13},12,\mathbf{11},14,15,16,17,18,19,20,21,\mathbf{2},\mathbf{1})$
\end{enumerate}

From these starting sequences, we can apply the following three operations (rules) repeatedly to generate new weakly consecutive sequences, if conditions allow:
\begin{enumerate}
    \item Twin Prime Swap (requires Starting Sequence 4): We can swap $q,q+2$ if both are prime with $\ceil{\frac{k}{2}} < q \leq k$ and both are in their starting positions for Starting Sequence 4.

    Example: 
    
    $(3,4,5,6,7,8,9,10,13,12,11,14,15,16,\mathbf{17},18,\mathbf{19},20,21,2,1) \to \\
    (3,4,5,6,7,8,9,10,13,12,11,14,15,16,\mathbf{19},18,\mathbf{17},20,21,2,1)$
    \item Prime Power Swap (any starting sequence): We can swap $p^c$ and $p^{c-1}$ for prime $p$, integer $c \geq 1$ if $p^c \leq k < p^c + p^{c-1}$.

    Example: $(1,\mathbf{2},3,\mathbf{4},5) \to (1,\mathbf{4},3,\mathbf{2},5)$
    \item Trivial Reversal (any starting sequence): We can reverse any weakly consecutive sequence.

    Example: $(1,2,3,4,5) \to (5,4,3,2,1)$
\end{enumerate}

We will explore these starting sequences and rules, and their proofs, in more detail in Section \ref{sec:rules}.


The Prime Power Swap is of particular interest due to its potential to produce large amounts of WCS's. In particular, we are interested in finding the least $k$ such that $n$ sequences can be obtained by solely applying Prime Power Swaps, or in other words, the least $k$ for which $n$ Prime Power Swaps can be performed maintaining weak consecutiveness. For example, the least $k$ for $n$ swaps up to $n=8$ is shown in Table \ref{tab:PPS}. Observe that the values of $k$ grow rapidly and rather erratically. We will study Prime Power Swaps in more detail in Section \ref{sec:primepowerswaps}.

The Twin Prime Swap has the potential to produce even larger amounts of WCS's for select values of $k$, but its application is conditional on the existence of many twin primes. We will discuss this further in Section \ref{sec:further}.

\begin{table}
\centering
\begin{tabular}{c|l}
    $n$ (Number of swaps) & Least $k$ (Length of sequence) \\ 
    \hline
    1 & 4 \\
    2 & 9 \\
    3 & 128 \\
    4 & 2209 \\
    5 & 4897369 \\
    6 & 1364785249 \\
    7 & 23995037731729 \\
    8 & 35278099774369 \\
\end{tabular}
\caption{The smallest $k$ admitting $n$ Prime Power Swaps, yielding at least $2^n$ WCS's}
\label{tab:PPS}
\end{table}


\section{General results}

In order to prove the validity of some of these starting sequences and rules, we require a certain property of weakly consecutive sequences involving the following sets:

\begin{definition}\label{def:characterization}
    Given a permutation $\sigma$ over $[k]$ and an $m \in [k]$, define the sets of indices $S_m^\sigma = \{ i \in [k] : i \equiv \sigma^{-1}(m) \bmod m \}$ and $T_m^\sigma = \{ i \in [k] : m \mid \sigma(i) \}$.
\end{definition}

The set $S_m^\sigma$ can be intuitively understood as all the indices lying a multiple of $m$ away from index $\sigma^{-1}(m)$, while $T_m^\sigma$ is the set of indices $i$ whose value $\sigma(i)$ is a multiple of $m$.

We prove the equivalence of these sets for weakly consecutive sequences.

\begin{lemma}[Division slice characterization of weakly consecutive sequence]\label{lem:characterization}
     Let $\sigma$ be a permutation over $[k]$. Then, $\sigma$ is weakly consecutive if and only if $S_m^\sigma = T_m^\sigma$ for all $m \in [k]$.
\end{lemma}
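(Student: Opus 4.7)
The plan is to prove both directions of the biconditional by unpacking definitions, with the nontrivial work being a counting argument on residue classes mod $m$.

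For the backward direction ($\Leftarrow$), I would proceed directly. Given $i,j \in [k]$ and a positive integer $m$ with $m \mid \sigma(i)$ and $m \mid (i-j)$, first note that $m \leq \sigma(i) \leq k$ so $m \in [k]$ and the hypothesis $S_m^\sigma = T_m^\sigma$ applies. Since $m \mid \sigma(i)$, we have $i \in T_m^\sigma = S_m^\sigma$, so $i \equiv \sigma^{-1}(m) \pmod m$. Combined with $j \equiv i \pmod m$, this gives $j \in S_m^\sigma = T_m^\sigma$, hence $m \mid \sigma(j)$.

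For the forward direction ($\Rightarrow$), fix $m \in [k]$ and set $i_0 = \sigma^{-1}(m)$. The inclusion $S_m^\sigma \subseteq T_m^\sigma$ is immediate from weak consecutiveness applied to $i=i_0$ (since $m \mid \sigma(i_0) = m$): any $j \in S_m^\sigma$ satisfies $m \mid (i_0 - j)$, so $m \mid \sigma(j)$. The reverse inclusion $T_m^\sigma \subseteq S_m^\sigma$ is where the main work lies, and here I would use a counting argument. Weak consecutiveness implies that if $i \in T_m^\sigma$, then every $j \in [k]$ with $j \equiv i \pmod m$ also lies in $T_m^\sigma$; thus $T_m^\sigma$ is a union of residue classes mod $m$ intersected with $[k]$. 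Writing $k = qm + s$ with $0 \leq s < m$, every such residue class has size $q$ or $q+1$, and there is at least one class of size $q+1$ if and only if $s > 0$. On the other hand, $|T_m^\sigma|$ equals the number of multiples of $m$ in $[k]$, which is exactly $q = \lfloor k/m \rfloor$. Since $i_0 \in T_m^\sigma$, the set is nonempty and hence consists of exactly one residue class of size $q$; as it contains $i_0$, that class must be the class of $i_0 \bmod m$, giving $T_m^\sigma \subseteq S_m^\sigma$.

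The main obstacle is the case analysis implicit in the counting step — in particular, verifying that no two residue classes can fit inside $T_m^\sigma$ regardless of whether $s = 0$ or $s > 0$ — but this reduces to the trivial inequalities $2q \geq q+1$ (when $q \geq 1$) and $q + (q+1) > q$, which cover all subcases. Everything else is bookkeeping with the definitions of $S_m^\sigma$ and $T_m^\sigma$.
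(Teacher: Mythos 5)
Your proof is correct. The backward direction matches the paper's, and you are in fact slightly more careful in noting that $m \leq \sigma(i) \leq k$ forces $m \in [k]$, so the hypothesis $S_m^\sigma = T_m^\sigma$ really does apply. The forward direction reaches the same conclusion by a somewhat different decomposition. The paper proves $S_m^\sigma \subseteq T_m^\sigma$, computes $|T_m^\sigma| = \lfloor k/m \rfloor$ exactly, and then bounds $|S_m^\sigma| \geq \lfloor k/m \rfloor$ from below using the fact that $S_m^\sigma$ is an arithmetic progression with gap $m$ whose endpoints cannot stray too far from $1$ and $k$; equality of the sets then follows from the inclusion plus the cardinality comparison. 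You instead prove the reverse inclusion $T_m^\sigma \subseteq S_m^\sigma$ directly, by observing that weak consecutiveness makes $T_m^\sigma$ a union of full residue classes mod $m$ within $[k]$, each of size $\lfloor k/m \rfloor$ or $\lfloor k/m \rfloor + 1$, so the exact count $|T_m^\sigma| = \lfloor k/m \rfloor$ (together with $\lfloor k/m \rfloor \geq 1$, which holds since $m \leq k$) forces $T_m^\sigma$ to be a single class, necessarily the class of $\sigma^{-1}(m)$. The two arguments are dual uses of the same residue-class counting: the paper counts the one class that is $S_m^\sigma$ and squeezes it inside $T_m^\sigma$, while you count the classes making up $T_m^\sigma$ and pin it down to $S_m^\sigma$. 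Your version arguably isolates the cleaner structural fact (that $T_m^\sigma$ is saturated under congruence mod $m$), at the cost of a small case analysis on class sizes; the paper's version avoids that case analysis but needs the slightly fiddly inequality manipulation $|S_m^\sigma| \geq \lceil (k+1)/m \rceil - 1 = \lfloor k/m \rfloor$.
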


\begin{proof}
    For the forward direction, suppose $\sigma$ is weakly consecutive; fix an $m \in [k]$ and let $S_m^\sigma, T_m^\sigma$ be as defined above. We need three pieces to show $S_m^\sigma = T_m^\sigma$: namely, $|S_m^\sigma| \geq \lfloor k/m \rfloor$; $S_m^\sigma \subseteq T_m^\sigma$; and $|T_m^\sigma| = \lfloor k/m \rfloor$. Together these imply the forward direction.

    To show $|S_m^\sigma| \geq \lfloor k/m \rfloor$, let $a = \min S_m^\sigma$ and $b = \max S_m^\sigma$. Since $S_m^\sigma$ consists of evenly spaced elements by a distance of $m$, we have $b = a + (|S_m^\sigma| - 1)m$. For the same reason, $b \geq k-m+1$ and $a \leq m$, otherwise another element of $S_m^\sigma$ would appear. Combining, we have
    \[
      k - m + 1 \leq b = a + (|S_m^\sigma| - 1)m \leq m + (|S_m^\sigma| - 1) m = m|S_m^\sigma|.
    \]
    Rearranging, we get $|S_m^\sigma| \geq \frac {k+1}{m} - 1$, and clearly $|S_m^\sigma| \geq \lceil \frac{k+1}{m} \rceil - 1$ since $|S_m^\sigma|$ is an integer. Thus $|S_m^\sigma| \geq \lfloor k/m \rfloor$. 
    
    Second: $S_m^\sigma \subseteq T_m^\sigma$. To show this, let $i \in S_m^\sigma$ and $j = \sigma^{-1}(m)$. We know that $i \equiv j \bmod m$ by definition of $S_m^\sigma$, or equivalently, $m \mid (j-i)$. And since $\sigma(j) = m$, we have $m \mid \sigma(j)$; by the definition of weak consecutiveness, $m \mid \sigma(i)$ and thus $i \in T_m^\sigma$.
    
    Third: Notice that $T_m^\sigma$ locates the multiples of $m$ as produced by $\sigma$; but since $\sigma$ is a permutation, it locates all the multiples of $m$ which are at most $k$, meaning $|T_m^\sigma| = \lfloor k/m \rfloor$.
    \\
    \\
    For the backwards direction, suppose $S_m^\sigma = T_m^\sigma$ for all $m \in [k]$. Fix an $m, i, j \in [k]$ with $m \mid \sigma(i)$ and $m \mid (i-j)$. By definition, $i \in T_m^\sigma = S_m^\sigma$ and thus $i \equiv \sigma^{-1}(m) \bmod m$. Furthermore, since $m \mid (i - j)$, it follows that $j \equiv i \equiv \sigma^{-1}(m) \bmod m$ and thus  $j \in S_m^\sigma$. So $j \in T_m^\sigma$, meaning $m \mid \sigma(j)$, as desired.
\end{proof}

In other words, $\sigma$ is weakly consecutive if and only if all multiples of $m$ in $\sigma$ have indices which are spaced $m$ apart.

\begin{remark} 
    For any permutation $\sigma$ over $[k]$ and any $m, n \in [k]$ with $m \mid n$, $T_m^\sigma \supseteq T_n^\sigma$.
\end{remark}

The following results are not needed in the sequel, but it is hoped that they shed added light on the structure of WCS's.

\begin{lemma}
    Let $\sigma$ be a weakly consecutive sequence of length $k$. For any $x \in [k]$, $\sigma^{-1}(x)$ lies between $(k \bmod m) + 1$ and $k - (k \bmod m)$, inclusive. 
\end{lemma}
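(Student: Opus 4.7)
The plan is to apply Lemma \ref{lem:characterization} with $m = x$ and read off the constraint on $a := \sigma^{-1}(x)$ from the fact that $S_x^\sigma = T_x^\sigma$ must have exactly $\lfloor k/x \rfloor$ elements. Writing $k = qx + r$ with $q = \lfloor k/x \rfloor$ and $r = k \bmod x$, the set $T_x^\sigma$ has size $q$ (it is the set of indices whose images are the $q$ multiples of $x$ in $[k]$). On the other hand, $S_x^\sigma = \{i \in [k] : i \equiv a \pmod{x}\}$ has a size that depends only on the residue $a \bmod x$.

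Next, I would do the routine count: for a residue class $c \in \{0,1,\ldots,x-1\}$, the number of integers in $[1,k]$ congruent to $c \pmod{x}$ equals $q+1$ if $1 \le c \le r$, and equals $q$ if $c = 0$ or $r+1 \le c \le x-1$. Setting this equal to $|T_x^\sigma| = q$ forces $a \bmod x \in \{0\} \cup \{r+1, r+2, \ldots, x-1\}$.

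Finally, I would extract the bounds on $a$ itself from this residue constraint. The smallest admissible positive index has residue $r+1$ (when $r < x-1$) or residue $0$ giving $a = x$ (when $r = x-1$); in either case $a \ge r+1 = (k \bmod x) + 1$. For the upper bound, if $a \equiv 0 \pmod x$ then $a \le qx = k - r$; if $a \equiv c \pmod x$ with $c \in \{r+1,\ldots,x-1\}$ then the largest such index in $[1,k]$ is $c + (q-1)x \le (x-1) + (q-1)x = qx - 1 < k - r$. Thus $a \le k - r = k - (k \bmod x)$, completing the proof.

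There is no real obstacle here: the only subtlety is keeping the boundary cases $r = 0$ and $r = x-1$ straight in the counting step, and making sure to treat the residue class $c = 0$ separately since multiples of $x$ always contribute exactly $q$ elements regardless of $r$. (I am reading the $m$ in the statement as a typo for $x$, since no $m$ is quantified.)
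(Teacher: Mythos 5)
Your proof is correct and rests on exactly the same mechanism as the paper's (one-line) argument: by Lemma \ref{lem:characterization} we must have $|S_x^\sigma| = |T_x^\sigma| = \lfloor k/x\rfloor$, and an index outside the stated range would put $\sigma^{-1}(x)$ in a residue class meeting $[k]$ in $\lfloor k/x\rfloor + 1$ points. You simply carry out explicitly the residue-class count that the paper dismisses as ``easy to show,'' including the correct reading of the statement's $m$ as $x$.
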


\begin{proof}
If not, it is easy to show that
\[
  |S_m^\sigma| = \lfloor k/m \rfloor + 1 > \lfloor k/m \rfloor = |T_m^\sigma|,
\]
contradicting Lemma \ref{lem:characterization}.
\end{proof}


\begin{lemma} \label{lem:div_slice}
    Fix $d \in [m]$. Let $\ell$ be the least index such that $d | \sigma(\ell)$. Then the permutation
    \[
      D_d (\sigma) : \big[\lfloor m/d \rfloor\big] \to \big[\lfloor m/d \rfloor\big]
    \]
    defined by
    \[
      D_d(\sigma)(i) = \frac{\sigma\big(\ell + (i-1)d\big)}{d}
    \]
    is weakly consecutive.

\end{lemma}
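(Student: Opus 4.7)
The plan is to apply Lemma \ref{lem:characterization} twice: first to pin down the structure of the indices where $d$ divides $\sigma$, and then to lift weak consecutiveness from $\sigma$ to $D_d(\sigma)$.

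First I would verify that $D_d(\sigma)$ is a well-defined permutation of $[\lfloor m/d \rfloor]$. By Lemma \ref{lem:characterization}, the set $T_d^\sigma$ of indices $i$ with $d \mid \sigma(i)$ equals $S_d^\sigma$, which is an arithmetic progression of common difference $d$. Since $\ell = \min T_d^\sigma$ and $|T_d^\sigma| = \lfloor m/d \rfloor$, we have
\[
  T_d^\sigma = \{\ell,\, \ell + d,\, \ldots,\, \ell + (\lfloor m/d \rfloor - 1)d\}.
\]
The values that $\sigma$ attains on this set are precisely the multiples of $d$ in $[m]$, namely $d, 2d, \ldots, \lfloor m/d \rfloor \cdot d$; dividing by $d$ therefore makes $D_d(\sigma)$ a bijection from $[\lfloor m/d \rfloor]$ to itself.

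Next I would verify the weak consecutiveness condition straight from Definition \ref{def:weak_consec}. Suppose $n \mid D_d(\sigma)(i)$ and $n \mid (i - j)$ for some $i, j \in [\lfloor m/d \rfloor]$ and some positive integer $n$. Write $i' = \ell + (i-1)d$ and $j' = \ell + (j-1)d$. The first hypothesis unpacks as $nd \mid \sigma(i')$, while $i' - j' = (i - j)d$ gives $nd \mid (i' - j')$. Applying weak consecutiveness of $\sigma$ with multiplier $nd$ yields $nd \mid \sigma(j')$; in particular $d \mid \sigma(j')$, so $D_d(\sigma)(j) = \sigma(j')/d$ is an integer divisible by $n$, as required.

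The only real subtlety is the indexing bookkeeping: one must check that the lifted indices $i', j'$ actually lie in $[m]$, which is automatic because $i, j \in [\lfloor m/d \rfloor]$ and $T_d^\sigma$ has exactly that many elements, all already in $[m]$. It is also worth noting that Definition \ref{def:weak_consec} permits arbitrary positive-integer multipliers, so passing from $n$ to $nd$ is legitimate even when $nd > m$. With these points in hand, both halves of the lemma follow immediately from Lemma \ref{lem:characterization} and the weak consecutiveness of $\sigma$.
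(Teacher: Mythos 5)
Your proof is correct and its core argument is essentially identical to the paper's: multiply the hypotheses by $d$ to lift them to the indices $\ell+(i-1)d$, $\ell+(j-1)d$, apply the weak consecutiveness of $\sigma$ with multiplier $nd$, and divide back by $d$. The only difference is that you additionally verify, via Lemma~\ref{lem:characterization}, that $D_d(\sigma)$ is a well-defined permutation of $[\lfloor m/d \rfloor]$ --- a point the paper's proof leaves implicit --- which is a worthwhile addition rather than a different approach.
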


We refer to $D_d(\sigma)$ as the ``$d$th division slice'' of $\sigma$, explaining the name of Lemma \ref{lem:characterization}.

\begin{proof} Let $\sigma$ be a weakly consecutive sequence. For some $d$, let $\ell$ be the least index such that $d\mid \sigma (\ell)$. Suppose for integer $m$ and indices $i,j$ that $m \mid D_d(\sigma)(i)$ and $m \mid (i-j)$. We prove $m \mid D_d(\sigma(j))$ as follows:

Multiplying through by $d$ yields both
$$md \mid D_d(\sigma)(i)d = \sigma(\ell + (i-1)d)$$
and
$$md \mid (i-j)d = (\ell - \ell) + (i - 1 - j + 1)d = (\ell + (i-1)d) - (\ell + (j-1)d)$$
Now since $\sigma$ is weakly consecutive, we can apply the definition directly:
$$md \mid \sigma(\ell + (i-1)d), md \mid (\ell + (i-1)d) - (\ell + (j-1)d) \Rightarrow md \mid \sigma(\ell + (j-1)d)$$
And dividing by $d$ yields $m \mid \frac{\sigma(\ell + (j-1)d)}{d} = D_d(\sigma)(j)$, as required.
\end{proof}

\section{Generating weakly consecutive sequences} \label{sec:rules}

As mentioned, we are quite interested in operations under which the set of weakly consecutive sequences is closed. We have found four distinct starting sequences and three ``rules'' or operations which can be applied to these sequences repeatedly and interchangeably to generate new WCS's. Each starting sequence and rule have different conditions for applicability depending on the length $k$ of the sequence.

We begin by describing and proving the validity of the starting sequences, starting with the simplest.
\begin{startseq}[Consecutive] \label{start:consecutive}
    For any $k$, the sequence $\sigma = 1,\dots, k$, given by $\sigma(i)=i$ for all $i$, is weakly consecutive.
\end{startseq}

The proof is obvious.

\begin{startseq}[$1$-Inversion] \label{start:nontriv}
    Let $k=p-1$ for prime $p$. The sequence $\sigma = 2,\dots, k, 1$, given by 
    $$\sigma(i) = \begin{cases} 
          1 & i = k \\
          i+1 & \text{otherwise}
       \end{cases}$$
    is weakly consecutive.
\end{startseq}
For example, taking $p = 7$ and $k=6$, we get the weakly consecutive sequence:
\begin{gather*}
    (2,3,4,5,6,\mathbf{1}).
\end{gather*}

\begin{proof}
    Suppose $m \mid \sigma(i)$ and $m \mid (i - j)$, with $i \neq j$. The $m=1$ case is trivial so we assume $m \geq 2$. The assumptions are only possible for $i < k$, as $m > 1$ cannot divide $\sigma(k) = 1$. Thus we know $1 < m < k$ and $\sigma(i) = i + 1$, meaning $m \mid (i+1)$. Since $m \mid (i-j)$ this implies $m \mid (j+1)$. Furthermore, $j \neq k$, since otherwise $m$ is a nontrivial divisor of $k+1$, which is prime. So $\sigma(j) = j+1$, and we're done. 
\end{proof}


\begin{startseq}[Twice Twin Prime] \label{start:twin}
    Let $k=2p$ or $k=2p+1$ for a prime $p$ with $p+2$ also prime. Then the sequence $\sigma$ given by 
    $$\sigma(i) = \begin{cases} 
          2 & i = 2p \\
          2p & i = 2 \\
          p & i = p+2 \\
          p+2 & i = p \\
          i & \text{otherwise}
       \end{cases}$$
    is weakly consecutive.
    
    In other words, a consecutive sequence with the positions of $2,2p$ and $p,p+2$ swapped is weakly consecutive.
\end{startseq}

For example, since $p = 5$ and $7$ are twin primes, we can take $k = 10$ or $11$ to get the following WCS's (bold indicates numbers that are out of consecutive order):
\[
  (1,\mathbf{10},3,4,\mathbf{7},6,\mathbf{5},8,9,\mathbf{2}) \quad \text{and} (\quad 1,\mathbf{10},3,4,\mathbf{7},6,\mathbf{5},8,9,\mathbf{2},11).
\]

\begin{proof}[Proof of Starting Sequence \ref{start:twin}]
    Fix $m, i, j$ with $i \neq j$ and suppose $m \mid \sigma(i)$ and $m \mid (i - j)$; we wish to show $m \mid \sigma(j)$. Note this is trivial if $m = 1$, so suppose $m \geq 2$. We proceed in cases according to the definition of $\sigma(i)$. 

    Case I: $i = 2p$. Then, $\sigma(i) = 2$ and thus $m = 2$, so $2 \mid (2p - j)$. Thus $j$ is even. Since $p$ is an odd prime, $j \neq p$ and $j \neq p + 2$, so either $j = 2$ or $\sigma(j) = j$. If $j = 2$, then $\sigma(j) = 2p$ which is divisible by $2$; otherwise, since $j$ is even, $2 \mid \sigma(j)$, as desired.

    Case II: $i = 2$. Then, $\sigma(i) = 2p$ and so $m = 2$ or $m = p$. If $m = 2$, then $2 \mid (2 - j)$ and $j$ is even; as argued in case I, $\sigma(j)$ is even for all even $j$. So now suppose $m = p$. So $p \mid (2 - j)$, or equivalently, $j = ap + 2$ for some integer $a$. Since $k \leq 2p + 1$, then, $j = p + 2$. Then $\sigma(p+2) = p$ which is divisible by $p$, as desired.

    Case III: $i = p + 2$, meaning $\sigma(i) = p$ and thus $m = p$. Thus $p \mid (p+2 - j)$, which is only possible for $j = 2$ or $j \geq 2p + 2$. But since $k \leq 2p + 1$, we know $j = 2$. Thus $\sigma(j) = 2p$ which is divisible by $p$, as desired.

    Case IV: $i = p$, meaning $\sigma(i) = p + 2$. We know $p+2$ is prime so $m = p + 2$. Thus $p + 2 \mid (p - j)$, which is only possible for $j \geq 2p + 2$. Once again, $k \leq 2p + 1$, so this case never happens.

    Case V: $\sigma(i) = i$. Note $m \neq p$, as $p$ and $2p$ are the only multiples of $p$ less than $k$. Similarly, $m \neq p + 2$. If $m = 2$, then from previous cases, we know that even indices in $\sigma$ produce even elements. Otherwise, if $m \geq 3$, then $m \mid (i - j)$ and $m \mid i$, meaning $m \mid j$. But if $j$ has a factor which is not $p$, $p+2$, or $2$, then $\sigma(j) = j$ which is divisible by $m$, as desired.
\end{proof}

\begin{startseq}[Twin Sophie Germain] \label{start:germain}
    Let $k=2p-1$, where $p, 2p+1, p+2$ are all prime. The sequence $\sigma$ given by 
    $$\sigma(i) = \begin{cases} 
          1 & i = k \\
          2 & i = k-1 \\
          p & i = p \\
          p+2 & i = p-2 \\
          i+2 & \text{otherwise}
       \end{cases}$$
    is weakly consecutive.

    In other words, if we move $2,1$ to the end of the consecutive sequence and swap the positions of $p$ and $p+2$, we have a weakly consecutive sequence.
    
\end{startseq}

\begin{proof}[Proof of Starting Sequence~\ref{start:germain}]
    Fix $m, i, j$ with $i \neq j$, $m \mid \sigma(i)$ and $m \mid (i - j)$. Furthermore we can assume $m \geq 2$.

    First, we show that $2 \mid \sigma(a)$ if and only if $2 \mid a$. If $2 \mid \sigma(a)$, then $a$ is either $2p - 2$ or $a + 2$. In the first case, clearly $2 \mid a$; and in the second case, $2 \mid a + 2$ since $a$ is even. Similarly, if $2 \mid a$, then either $a = 2p - 2$ or $\sigma(a) = a + 2$, both of which are even when $a$ is.

    This means that the $m = 2$ case is completed; since if $2 \mid \sigma(i)$ and $2 \mid (i - j)$, then $i$ is even (and thus so is $j$) meaning $\sigma(j)$ is even. So we now suppose $m \geq 3$.

    Now, we proceed by casing on $\sigma(i)$; we can skip $i = k$ and $i = k - 1$, since the corresponding values of $\sigma$ have no divisors greater than $2$.

    Case I: $i = p$, so $\sigma(i) = p$. Then, $m = p$; however, this case never happens since $p \mid (p - j)$ implies $j \geq 2p > k$.

    Case II: $i = p - 2$, so $\sigma(i) = p + 2$. Then $m = p + 2$; since $p + 2$ is prime, once again, this case never happens since $j \geq 2p + 4 > k$.

    Case III: $\sigma(i) = i + 2$. Then, $m \mid i + 2$ and $m \mid (i - j)$, meaning $m \mid (j + 2)$. Furthermore, $m \notin \{p, p+2\}$, since all multiples of $p$ and $p+2$ in $\sigma$ are covered by previous casing on values of $i$. This means $j \neq p-2$ and $j \neq p$ (otherwise $m \mid p$ or $m \mid (p+2)$, impossible since they are both prime).

    Furthermore, note $j \neq 2p - 2$. This is because otherwise, $m \mid 2p$ for $m \notin \{ 2, p \}$. Similarly, because $2p+1$ is prime, $j \neq 2p-1$. So we conclude that $\sigma(j) = j + 2$, meaning $m \mid \sigma(j)$ as desired.
\end{proof}
For example for 21 we have the following: 
\begin{gather*} 3,4,5,6,7,8,9,10,\mathbf{13},12,\mathbf{11},14,15,16,17,18,19,20,21,\mathbf{2},\mathbf{1} 
\end{gather*}

We now move on to describe the rules for generating new sequences from these starting sequences. Each rule has conditions for when it can be applied, but only one requires a specific starting sequence. This rule is as follows: 
\begin{rul}[Twin Prime Swaps]\label{rul:germain}
   Suppose $q$ and $q+2$ are both prime with $\ceil{\frac{k}{2}} < q \leq k$.
   We can swap $q,q+2$ if both are in their starting positions for Starting Sequence 4. In other words, if $\sigma$ is a WCS with
   \[
     \sigma(q - 2) = q \quad \text{and} \sigma(q) = q+2,
   \]
   then the sequence $G_q(\sigma)$ given by 
    $$G_q(\sigma)(i) = \begin{cases} 
          q+2 & i = q-2 \\
          q & i = q \\
          \sigma(i) & \text{otherwise}
       \end{cases}$$
    is weakly consecutive as well.
\end{rul}

\begin{proof}
    Let $G_q := G_q(\sigma)$ for convenience. Note that $q > p + 2$.

    Fix an arbitrary $m \geq 2$, $i \neq j$, with $m \mid G_q(i)$ and $m \mid (i - j)$. If $m = q$, then $i = q$. Thus, $j$ is a multiple of $q$; however, this never happens since $2q > 2(p+2) = 2p + 4 > k$. Similarly, if $m = q + 2$, then $i = q - 2$. So $j$ is a multiple of $q - 2$. However, $2(q - 2) = 2q - 4 > 2(p+2) - 4 = 2p > k$, so this case also never happens.

    So now, we can assume that $m \notin \{ q, q + 2 \}$. But this means that $G_q(i) = \sigma(i)$, and since $\sigma$ is weakly consecutive, we know $m \mid \sigma(j)$. And similarly, $G_q$ only differs from $\sigma$ at the position of elements $q$ and $q + 2$; so $G_q(j) = \sigma(j)$ meaning $m \mid G_q(j)$, as desired.
\end{proof}

The first $p$ such that this rule gives sequences not otherwise achievable is $p = 11$, which for $k = 21$ we get the following WCS's:
\begin{gather*}
    3,4,5,6,7,8,9,10,\mathbf{13},12,\mathbf{11},14,15,16,17,18,19,20,21,\mathbf{2},\mathbf{1} \\
    3,4,5,6,7,8,9,10,\mathbf{13},12,\mathbf{11},14,15,16,\mathbf{19},18,\mathbf{17},20,21,\mathbf{2},\mathbf{1}
\end{gather*}


We now discuss a rule with similar effect.

\begin{rul}[Prime Power Swaps] \label{rul:prime_power_swaps}
    Let $\sigma$ be a weakly consecutive sequence, and prime $p$ and integer $c \geq 1$ such that $p^c \leq k < p^c + p^{c-1}$. Then the sequence $S_{p,c}(\sigma)$ given by 
    $$S_{p,c}(\sigma)(i) = \begin{cases} 
          p^c & \sigma(i) = p^{x-1} \\
          p^{c-1} & \sigma(i) = p^c \\
          \sigma(i) & \text{otherwise}
       \end{cases}$$
    is weakly consecutive. 
\end{rul}

\begin{proof}[Proof of Rule \ref{rul:prime_power_swaps}]
    Let $\pi = S_{p, c}(\sigma)$ and define the sets $S_m^\sigma, S_m^\pi, T_m^\sigma, T_m^\pi$ as in Definition \ref{def:characterization}. By Lemma \ref{lem:characterization}, it suffices to show that $S_m^\pi = T_m^\pi$ for all $m \in [k]$. Fix such an $m \in [k]$.
    
    Case 1: $m = p^a$ for some integer $a$. Notice that $a \leq c$, since $k < p^{c} + p^{c-1} \leq p^{c+1}$. There are subcases:
    
    \begin{itemize}
      \item Case 1.1: $m = p^c$. Write $j = \pi^{-1}(p^c)$ for convenience. Then $T_m^\pi$ is a singleton set: namely, $T_m^\pi = \{ j \}$. It can't have any other elements since $p^c$ is the only multiple of $p^c$ less than $k$. Furthermore, $j \in S_m^\pi$ by definition. To show this is the only such element, suppose $i \in S_m^{\pi}$ is arbitrary. Note $i \equiv j \bmod p^c$, and since $j = \pi^{-1}(p^c) = \sigma^{-1}(p^{c-1})$, we also have $i \equiv \sigma^{-1}(p^{c-1}) \bmod p^{c-1}$ meaning $i \in S_{p^{c-1}}^\sigma$.
    
      Now, since $i \in S_{p^{c-1}}^\sigma$, this means that $i = j + ap^{c-1}$ for some integer $a$. The elements of $S_{p^{c-1}}^\sigma$ are uniformly spaced apart by a distance of $p^{c-1}$, so we know that $j, j + p^{c-1}, j + 2p^{c-1}, \dots, j + ap^{c-1}$ (or with negative signs if $a$ is negative) are all elements of $S_{p^{c-1}}^\sigma$. Thus $|S_{p^{c-1}}^\sigma| \geq |a| + 1$. But $|S_{p^{c-1}}^\sigma| = |T_{p^{c-1}}^\sigma| \leq p$; this is because there are only $p$ multiples of $p^{c-1}$ less than $k$ (since we assumed $(p+1)p^{c-1} > k$). Combining, we have $|a| \leq p - 1$.
    
      Finally, though, since $i \in S_m^\pi$ we know $i \equiv j \bmod p^c$. But $i = j + ap^{c-1}$ and $|a| \leq p-1$, we know $a$ is not large enough to reach another multiple of $p^c$ aside from $j$; thus $a = 0$ and $i = j$. So in fact, $S_m^\pi = \{ j \} = T_m^\pi$ as desired.
      \item Case 1.2: $m = p^a$ for $a < c$. Then $T_m^\pi = T_m^\sigma$, because the only multiples of $m$ which are moved are $p^{c-1}$ and $p^c$, and they are swapped with each other. To show that $S_m^\pi = S_m^\sigma$, notice that $\sigma^{-1}(p^c) \in T_m^{\sigma} = S_m^\sigma$ because $p^{c-1} \mid p^c$. Thus, $\sigma^{-1}(p^c) \equiv \sigma^{-1}(p^{c-1}) \bmod p^{c-1}$ and thus $\sigma^{-1}(p^c) \equiv \sigma^{-1}(p^{c-1}) \bmod m$. By substitution, $\pi^{-1}(p^c) \equiv \pi^{-1}(p^{c-1}) \bmod m$; and furthermore, no other multiples of $m$ moved from $\sigma$ to $\pi$. So in fact, $S_m^\pi = S_m^\sigma$; since $\sigma$ is weakly consecutive we have $S_m^\sigma = T_m^\sigma$, thus $S_m^\pi = T_m^\pi$.
    \end{itemize}
    
    Case 2: $m$ does not divide $p^{c-1}$. Then, $T_m^\pi = T_m^\sigma$, as no multiples of $m$ moved from $\sigma$ to $\pi$. For the same reason, $S_m^\pi = S_m^\sigma$, and thus $S_m^\pi = T_m^\pi$ by Lemma \ref{lem:characterization}.
\end{proof}

As with Twin Prime Swaps, we can perform Prime Power Swaps for many different valid primes if they exist for given $k$.

For example, if $k = 11$, then $(p,c) = (2,3)$, $(3,2)$, and $(11,1)$ are feasible, so we get the WCS's
\begin{gather*}
    1,2,3,4,5,6,7,8,9,10,11 \\
    1,2,3,\mathbf{8},5,6,7,\mathbf{4},9,10,11 \\
    1,2,\mathbf{9},4,5,6,7,8,\mathbf{3},10,11 \\
    \mathbf{11},2,3,4,5,6,7,8,9,10,\mathbf{1}
\end{gather*}
and others obtained by doing two or more Prime Power Swaps in succession.

Lastly, we have the simplest rule.

\begin{rul}[Trivial Reversal]\label{rul:triv rev}
    Let $\sigma$ be a weakly consecutive sequence. The sequence $R(\sigma)$ given by $R(\sigma)(i) = \sigma(k-i+1)$ for all $i$ is weakly consecutive.
\end{rul}

The proof is immediate.

For example, as the sequence $1,4,3,2$ is weakly consecutive, its reversal $2,3,4,1$ is also weakly consecutive.

\begin{remark} 
    $N(k) \geq 2$ for all $k \geq 2$, since both $(1,2,\dots,k)$ and its reversal $(k,\dots,2,1)$ are weakly consecutive. 
\end{remark}

\begin{lemma}
The three rules commute; in other words, if applying Rule X followed by Rule Y preserves weak consecutiveness, then applying Rule Y followed by Rule X is also possible and also preserves weak consecutiveness.
\end{lemma}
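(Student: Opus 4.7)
The plan is a case analysis over the six unordered pairs of rules. The unifying observation is that each of the three rules is essentially a value-swap: Prime Power Swap $(p,c)$ exchanges $p^{c-1}$ and $p^c$, Twin Prime Swap at $q$ exchanges $q$ and $q+2$, and Trivial Reversal relabels positions by $i \mapsto k-i+1$ without changing values. When two value-swaps act on disjoint pairs of values they automatically commute, and neither can disturb the other's applicability conditions; so for each pair it suffices to verify disjoint supports (or, for $R$, position-level commutativity).

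For two Prime Power Swaps $(p,c)$ and $(p',c')$, the inequality $p^c \leq k < p^c + p^{c-1}$ uniquely determines $c$ given $p$, so distinct swaps have distinct primes, and powers of different primes are distinct; hence the value sets are disjoint. For two Twin Prime Swaps at $q \neq q'$, an overlap forces $\{q, q+2, q+4\}$ all prime and hence $q = 3$; combined with $\lceil k/2 \rceil < q = 3$ this bounds $k \leq 5$, which I would check by hand. For a TPS at $q$ and a PPS $(p,c)$: since $q$ and $q+2$ are both prime, any shared value forces $c = 1$ and $p \in \{q, q+2\}$, whence the PPS range gives $p = k$; together with $q+2 \leq k$ this leaves only the boundary $p = q+2 = k$. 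In that case $k$, $k-2$, and (via the underlying starting sequence) further primes are forced to be simultaneously prime, and a short modular check reduces the situation to $k = 5$, where one verifies directly that TPS's positional precondition cannot hold in any WCS.

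Finally, Trivial Reversal commutes with any value-swap: since $R$ only relabels position indices, performing $R$ before or after a value swap places the two affected values at the same final positions. The one subtlety is that Twin Prime Swap's stated precondition $\sigma(q-2) = q,\ \sigma(q) = q+2$ is positional and is not preserved verbatim by $R$. Here I would apply Lemma \ref{lem:characterization} with $m \in \{q, q+2\}$: since $q, q+2 > k/2$, each is the unique multiple of itself in $[k]$, so swapping $q$ and $q+2$ in any WCS again yields a WCS regardless of their exact positions. This promotes TPS to a purely value-based operation whose validity is symmetric under $R$. I expect this reinterpretation of TPS to be the main obstacle; once it is in place, the remaining commutations follow from disjoint-support considerations and from $R \circ R = \mathrm{id}$.
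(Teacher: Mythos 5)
Your overall strategy---a pairwise case analysis showing that the two operations touch disjoint sets of values---is the same as the paper's, and your handling of the PPS--PPS, TPS--TPS, and TPS--PPS pairs is essentially sound (though in the TPS--PPS case primality of the shared value does not by itself force $c=1$: with $c=2$ the shared value could be the prime $p^{c-1}=p\in\{q,q+2\}$; this is excluded only because $q>k/2$ gives $q^2>k$, violating $p^c\le k$).

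The genuine gap is in the step you yourself flag as the main obstacle. The claim that ``since $q, q+2 > k/2$, each is the unique multiple of itself in $[k]$, so swapping $q$ and $q+2$ in any WCS again yields a WCS regardless of their exact positions'' is false. By Lemma \ref{lem:characterization} the swapped sequence $\pi$ is weakly consecutive only if $S_q^\pi = T_q^\pi$; while $T_q^\pi$ is indeed a singleton, $S_q^\pi$ depends on the \emph{position} at which the value $q$ lands, and it acquires a second element whenever that position lies in $[1,k-q]\cup[q+1,k]$. Concretely, take $k=7$ and $q=5$: the consecutive sequence $(1,2,3,4,5,6,7)$ is a WCS, $5$ and $7$ are twin primes with $\lceil 7/2\rceil < 5 \le 7$, and each is the unique multiple of itself in $[7]$, yet swapping the values $5$ and $7$ yields $(1,2,3,4,7,6,5)$, which is not weakly consecutive since $5 \mid \sigma(7)$ and $5 \mid (7-2)$ but $5 \nmid \sigma(2)=2$. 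So the positional precondition of Rule \ref{rul:germain} cannot be discarded, and your reduction of the Twin Prime Swap to a ``purely value-based operation'' proves too much. The repair is not difficult---one checks that the positions $q-2$ and $q$ prescribed by the precondition, and their images $k-q+3$ and $k-q+1$ under reversal, all lie in the ranges where $S_q$ and $S_{q+2}$ remain singletons, so the swap is valid both before and after a reversal---but that check is exactly the content you tried to avoid. (For comparison, the paper's own proof simply asserts that the Trivial Reversal commutes with the other rules ``by definition'' and does not address this point explicitly either.)
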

\begin{proof}
By definition, the Trivial Reversal commutes with the other rules. Note that two Twin Prime Swaps commute because they concern different numbers in the WCS (the unique overlapping twin pairs, $(3,5)$ and $(5,7)$, cannot both lie in the swappable range for the same $k$). For the same reason, any two Prime Power Swaps commute.

It suffices to show that the Twin Prime Swap and the Prime Power Swap cannot interfere with each other. Twin Prime Swaps only affect primes to the first power. $2p-1$ cannot be prime if both $p,p+2$ are prime because one of $p,p+2$ is congruent to $1 \pmod 3$ and the other is congruent to $2 \pmod 3$. Then we have $p+p+2 \equiv 0\pmod 3$ and $2p-1\equiv 0\pmod 3$. Since the only Prime Power Swap allowed within the rules with $c=1$ is when $p\leq k<p+1$. Since $2p-1$ isn't prime and every other prime power of degree greater than one is in its original location we are free to do Prime Power Swaps.   
\end{proof}

\section{Prime Power Swaps} \label{sec:primepowerswaps}

We look with special depth into Prime Power Swaps since the number of $k$'s allowing Prime Power Swaps has a positive density and produces many new sequences, whereas the other rules either occur infrequently or do not produce many new sequences.

\paragraph{Notation:} Use $\lg(\cdot)$ for $\log_2(\cdot)$, while $\log(\cdot)$ denotes $\log_e(\cdot)$.

For a prime $p$ and a $c \in \Z^+$, define $I_{p,c} = [p^c, p^c + p^{c-1}) \cap \Z = \{ p^c, p^c + 1, \dots, p^c + p^{c-1} - 1 \}$; that is, the interval of sequence-lengths $k$ such that swapping $p^c$ and $p^{c-1}$ is permitted in Rule \ref{rul:prime_power_swaps}.

To determine the number of swaps, we ask how many such intervals a number $k$ is a part of; clearly it is always finite, as for each prime $p$ there is at most one $c$ with $k \in I_{p,c}$. Let us define this as $\#I(k)$; more formally,
\[
  \#I(k) = \left|\left\{ (p, c) : p \text{ prime}, c \in \Z^+, k \in I_{p,c} \right\}\right|.
\]
Thus $\#I(k) \leq \pi(k)$ with the usual prime counting function $\pi$. We can say something stronger, in fact; for each $c$, there is at most one $p$ with $n \in I_{p,c}$. This is because for fixed $c$ and distinct primes $p < q$, we have $p^c + p^{c-1} \leq (p+1)^c \leq q^c$ by applying the binomial theorem to $(p+1)^c$, implying $I_{p,c} \cap I_{q,c} = \emptyset$.

When $k \in I_{p,c}$, then $2^c \leq p^c \leq k$. Thus, $c$ can range from $1$ to $\lg(n)$, so $\#I(k) \leq \lg(k)$. Note that by Rule~\ref{rul:prime_power_swaps}, $N(k) \geq 2^{\#I(k)}$. 

\subsection{Lower bounds on \texorpdfstring{$\#I(k)$}{\#I(k)} and \texorpdfstring{$N(k)$}{N(k)}}
One can ask how large of a number $k$ is required to attain a desired number of intervals (and thus Prime Power Swaps), $n$; that is, $\#I(k) \geq n$. It is not immediately clear that $\#I(k)$ is unbounded, but the following result establishes a lower bound on the growth rate of $\#I(k)$:

\begin{theorem} \label{thm:asymptote}
  Fix an $n \in \Z^+$. There exists a $k \leq 2^{2^{2n \lg(n) (1 + o(1))}}$ with $\#I(k) \geq n$.
\end{theorem}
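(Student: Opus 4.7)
The plan is to reduce the existence of $k$ with $\#I(k) \geq n$ to a simultaneous Diophantine approximation problem, then apply Dirichlet's theorem. Fix distinct primes $p_1 < p_2 < \cdots < p_n$ and positive integers $c_1, \ldots, c_n$; any integer in the intersection $\bigcap_{i=1}^{n} I_{p_i, c_i}$ witnesses $\#I(k) \geq n$. Since $I_{p_i, c_i} = [p_i^{c_i}, p_i^{c_i}(1 + 1/p_i))$, this intersection is nonempty provided $\max_i p_i^{c_i} < \min_i p_i^{c_i}(1 + 1/p_i)$; the binding constraint comes from the largest prime $p_n$, so it suffices to arrange the $n$ reals $c_i \log p_i$ to all lie in a common window of length $\log(1 + 1/p_n) \sim 1/p_n$. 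Then $k = \lceil \max_i p_i^{c_i} \rceil$ produces the desired witness, provided the intersection has length at least $1$ (which it will, since $\min_i p_i^{c_i}$ will be much larger than $p_n$).

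Next, I would take $p_1 = 2 < p_2 < \cdots < p_n$ to be the first $n$ primes, so that by the Prime Number Theorem $p_n = (1 + o(1)) n \log n$. Apply Dirichlet's simultaneous approximation theorem to the $n-1$ real numbers $\beta_i := \log p_i / \log p_1$ for $i = 2, \ldots, n$: for every integer $N \geq 1$, there exist integers $c_1 \in [1, N]$ and $c_2, \ldots, c_n$ with $|c_1 \beta_i - c_i| \leq N^{-1/(n-1)}$. Choosing $N \approx (2 p_n \log p_1)^{n-1}$ forces $|c_1 \log p_1 - c_i \log p_i| < \log(1 + 1/p_n)$, placing all $c_i \log p_i$ in the same window as required. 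A short check shows each $c_i \geq 1$ as soon as $c_1$ exceeds a small constant, since the approximation error is then much smaller than $c_1 \beta_i$.

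Finally, bound $k \leq 2 p_1^{c_1}$, so that $\lg k \leq c_1 + O(1) \leq N + O(1) = O(p_n^{n-1})$, and therefore
\[
    \lg \lg k \leq (n-1) \lg p_n + O(1) \leq 2 n \lg n \, (1 + o(1)),
\]
using $\lg p_n = \lg n + \lg \log n + O(1)$ from the Prime Number Theorem. The stated constant $2n \lg n$ has slack; a tighter Dirichlet accounting actually delivers $n \lg n \, (1 + o(1))$. The main obstacle is purely in verification: one must confirm that the Dirichlet approximants $c_i$ are strictly positive and that the intersection interval contains an integer. Both are ensured by taking $N$ (and thus $c_1$) sufficiently large so that $\min_i p_i^{c_i}/p_n \geq 1$; these are routine but need to be carried out carefully to keep track of the lower-order contributions from the conversion between $\log(1+1/p_n)$ and $1/p_n$.
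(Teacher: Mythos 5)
Your proposal runs on the same engine as the paper's proof --- simultaneous Dirichlet approximation applied to ratios of logarithms of the first few primes --- but it handles the crucial one-sidedness of the intervals $I_{p,c} = [p^c, p^c + p^{c-1})$ in a genuinely different way. Dirichlet controls $|c_1\beta_i - c_i|$ only up to sign, and a prime power $p_i^{c_i}$ sitting just \emph{above} the common target point is useless, since $k$ must be at least $p_i^{c_i}$. The paper's fix is to start with $2n$ primes, pigeonhole on the sign of the approximation error, and show (Lemmas \ref{lem:intervalone} and \ref{lem:intervaltwo}) that either $p^C$ or $p^C + p^{C-1}$ lies in at least $n$ of the intervals. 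Your fix uses only $n$ primes, forces the points $c_i\log p_i$ into a short common window, and takes $k$ to be an integer in $\bigcap_i I_{p_i,c_i}$. This works, and your stated worry about the intersection having length at least $1$ is unnecessary: the left endpoint of the intersection is $\max_i p_i^{c_i}$, which is itself an integer and lies in every $I_{p_j,c_j}$ once the window condition holds, so $k = \max_i p_i^{c_i}$ always serves. Using $n$ rather than $2n$ primes is exactly what buys you the sharper double exponent $n\lg n$ in place of the paper's $2n\lg n$.

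Two details need repair before the argument closes. First, your normalization is backwards: with $\beta_i = \log p_i/\log p_1$, Dirichlet bounds $|c_1\log p_i - c_i\log p_1|$, which asserts $p_i^{c_1} \approx 2^{c_i}$ rather than the needed $p_i^{c_i} \approx 2^{c_1}$; you want $\beta_i = \log p_1/\log p_i$, after which the error in $|c_1\log p_1 - c_i\log p_i|$ picks up a factor $\log p_i \leq \log p_n$, so $N$ should be roughly $(2p_n\log p_n)^{n-1}$ rather than $(2p_n\log p_1)^{n-1}$ (this perturbs only lower-order terms in $\lg\lg k$). Second, the window length must be \emph{strictly} less than $\log(1+1/p_n)$ --- take half of it, say --- since the nonemptiness condition $\max_i p_i^{c_i} < p_j^{c_j}+p_j^{c_j-1}$ is strict. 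The positivity of the $c_i$ is automatic and does not require $c_1$ to exceed any threshold: $c_i \geq c_1\beta_i - N^{-1/(n-1)} \geq \log 2/\log p_n - N^{-1/(n-1)} > 0$. With these adjustments your argument is correct and marginally sharper than the paper's.
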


As in the introduction, let $N(k)$ denote the number of weakly consecutive sequences of length $k$. Since $N(k) \geq 2^{\#I(k)}$, we have the following corollary:

\begin{corollary} \label{cor:nk}
  Fix an $n \in \mathbb{Z}^+$. There exists a $k \leq 2^{n^{2 \lg(\lg (n)) (1 + o(1))}}$ with $N(k) \geq n$.
\end{corollary}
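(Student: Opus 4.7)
The plan is to apply Theorem~\ref{thm:asymptote} after a simple change of variable, using the bound $N(k) \geq 2^{\#I(k)}$ already established via Rule~\ref{rul:prime_power_swaps}. To guarantee $N(k) \geq n$ it suffices to find a $k$ with $\#I(k) \geq \lceil \lg n \rceil$, since then $N(k) \geq 2^{\lceil \lg n \rceil} \geq n$.

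First I would set $m := \lceil \lg n \rceil$ and apply Theorem~\ref{thm:asymptote} with $m$ in the role of $n$. This produces a $k$ satisfying $\#I(k) \geq m$ and
\[
  k \;\leq\; 2^{2^{2 m \lg m \,(1 + o(1))}}.
\]
This $k$ is the candidate that witnesses the corollary.

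Next I would simplify the upper bound using the asymptotics of $m$. As $n \to \infty$, we have $m = \lceil \lg n \rceil = \lg n \cdot (1 + o(1))$ and therefore $\lg m = \lg \lg n \cdot (1 + o(1))$. Substituting these in,
\[
  2 m \lg m \,(1 + o(1)) \;=\; 2 \lg n \cdot \lg \lg n \cdot (1 + o(1)),
\]
so that
\[
  2^{2 m \lg m (1 + o(1))} \;=\; 2^{2 \lg n \cdot \lg \lg n \cdot (1 + o(1))} \;=\; n^{2 \lg \lg n \,(1 + o(1))}.
\]
Plugging this back into the double exponential yields $k \leq 2^{n^{2 \lg(\lg n) (1 + o(1))}}$, which is precisely the bound claimed.

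There is essentially no obstacle in this corollary itself: all the work lies in Theorem~\ref{thm:asymptote}, and the rest is a mechanical substitution $n \mapsto \lceil \lg n \rceil$ combined with the already-proved inequality $N(k) \geq 2^{\#I(k)}$ and a short asymptotic computation. The only care needed is to verify that the $(1 + o(1))$ factors inherited from Theorem~\ref{thm:asymptote} and from the approximation $\lceil \lg n \rceil \sim \lg n$ can be collected into the single $(1 + o(1))$ factor appearing in the corollary, which is immediate.
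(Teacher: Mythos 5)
Your proposal is correct and is exactly the argument the paper intends: the paper derives the corollary directly from Theorem~\ref{thm:asymptote} together with the inequality $N(k) \geq 2^{\#I(k)}$, which amounts to the same substitution $n \mapsto \lceil \lg n \rceil$ and asymptotic simplification you carry out. No issues.
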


Our focus will be on proving Theorem~\ref{thm:asymptote}.

\textit{Proof of Theorem~\ref{thm:asymptote}.} Fix an $n \in \Z^+$, and consider the first $2n$ primes, $2 = p_1 < p_2 < \dots < p_{2n}$. For convenience, define $p = p_{2n}$. Define $\varepsilon := \frac{\log 
\left(1 + \frac1p\right)}{\log p}$ and consider the set of numbers $\{x_i\}_{1 \leq i \leq 2n-1}$ defined by $x_i := \frac{\log p_i}{\log p}$. By simultaneous Dirichlet approximation \cite{Dir-encyc-maths}, there exists a non-zero integer $C \leq \left\lceil \frac{1}{\varepsilon^{2n-1}} \right\rceil$ and integers $\{z_i\}_{1 \leq i \leq 2n-1}$ such that $|Cx_i - z_i| \leq \varepsilon$ for all $i$. Furthermore, we can assume without loss of generality that $C$ is positive (else we can take $-C$ and $-z_i$).

Now, we prove two useful lemmas.

\begin{lemma} \label{lem:intervalone}
  If $0 \leq Cx_i - z_i \leq \varepsilon$, then $p^C \in I_{p_i, z_i}$.
\end{lemma}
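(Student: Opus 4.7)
The plan is to unpack the containment $p^C \in I_{p_i, z_i}$ into logarithmic form and derive it from the Dirichlet hypothesis after rescaling. Since $I_{p_i, z_i} = [p_i^{z_i}, p_i^{z_i}(1 + 1/p_i))$, taking natural logarithms reduces the target to the two-sided bound
\[
  0 \leq C \log p - z_i \log p_i < \log(1 + 1/p_i).
\]

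First, I would scale the hypothesis $0 \leq Cx_i - z_i \leq \varepsilon$ by the appropriate factor of $\log p$ and unpack the definitions of $x_i$ and $\varepsilon$, obtaining a two-sided bound on $C \log p - z_i \log p_i$ with $0$ on the left and (after the scaling) $\varepsilon \log p_i$ on the right. The left half immediately yields the required lower bound $p_i^{z_i} \leq p^C$.

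Second, I would upgrade the upper bound so that $\log(1 + 1/p_i)$ appears on the right rather than $\varepsilon \log p_i = \log(1 + 1/p) \cdot \log p_i / \log p$. Since $p_i \leq p = p_{2n}$, the ratio $\log p_i / \log p \leq 1$, so $\varepsilon \log p_i \leq \log(1 + 1/p)$. Monotonicity of $\log(1 + x)$ combined with $1/p \leq 1/p_i$ then gives $\log(1 + 1/p) \leq \log(1 + 1/p_i)$. Chaining yields $C \log p - z_i \log p_i \leq \log(1 + 1/p_i)$, which is the required upper bound.

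The main obstacle is strictness at the upper endpoint, since $I_{p_i, z_i}$ is half-open whereas the chain of inequalities above is closed. Both $p^C$ and $p_i^{z_i} + p_i^{z_i - 1}$ are integers, so equality at that endpoint would force $p_i^{z_i - 1} \mid p^C$, impossible for distinct primes $p_i \neq p$ once $z_i \geq 2$. The residual corner case $z_i = 1$ with $p^C = p_i + 1$ can be excluded either by a short direct argument (the Dirichlet bound would have to be saturated exactly) or by replacing $\varepsilon$ by $\varepsilon/2$ throughout, a loss that is absorbed into the $(1 + o(1))$ factor in Theorem~\ref{thm:asymptote}.
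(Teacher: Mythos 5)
Your proof is correct and follows essentially the same route as the paper: both halves of the Dirichlet bound are exponentiated (equivalently, the target is read in logarithmic form), the left half giving $p_i^{z_i} \leq p^C$ and the right half giving $p^C \leq p_i^{z_i}(1+1/p) \leq p_i^{z_i}(1+1/p_i)$. The only divergence is your handling of strictness at the right endpoint: the integer-divisibility detour and the residual case $z_i = 1$, $p^C = p_i + 1$ are unnecessary, because $p_i \leq p_{2n-1} < p_{2n} = p$ holds strictly for every $i$ in range, so $\varepsilon \log p_i < \log(1+1/p) < \log(1+1/p_i)$ is already strict and the whole chain is strict --- which is exactly how the paper's proof lands inside the half-open interval.
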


\begin{proof}
  We need to show $p_i^{z_i} \leq p^C$ and $p^C < p_i^{z_i} + p_i^{z_i - 1}$.

  For the first inequality, note that $z_i \leq Cx_i$ by our assumption, meaning
  \[
    p_i^{z_i} \leq p_i^{Cx_i} = \left(p_i^{\frac{\log(p_i)}{\log(p)}}\right)^C = p^C.
  \]
  For the second inequality, we use the fact that $Cx_i \leq z_i + \varepsilon$, so that
  \[
    p^C = p_i^{Cx_i} \leq p_i^{z_i + \varepsilon} = p_i^{z_i} p_i^{\varepsilon}.
  \]
  Note that $\varepsilon = \frac{\log 
  \left(1 + \frac1p\right)}{\log p} < \frac{\log 
  \left(1 + \frac1p\right)}{\log {p_i}}$, since $p_i < p$. Thus $p^\varepsilon < 1 + \frac1p < 1 + \frac{1}{p_i}$, meaning
  \[
    p^C < p_i^{z_i} \left(1 + \frac{1}{p_i}\right) = p_i^{z_i} + p_i^{z_i - 1}
  \]
  as desired.
\end{proof}

\begin{lemma} \label{lem:intervaltwo}
  If $0 \leq z_i - Cx_i \leq \varepsilon$, then $p^C + p^{C-1} \in I_{p_i, z_i}$.
\end{lemma}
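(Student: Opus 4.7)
The plan is to mirror the structure of Lemma~\ref{lem:intervalone} and establish the two membership inequalities $p_i^{z_i} \le p^C + p^{C-1}$ and $p^C + p^{C-1} < p_i^{z_i} + p_i^{z_i-1}$ separately. The key quantitative fact I will exploit is that $p^C + p^{C-1} = p^C\bigl(1 + \tfrac1p\bigr)$, which by the definition of $\varepsilon$ equals exactly $p^{C+\varepsilon}$. So the target interval, viewed on the logarithmic scale $\log_p(\cdot)$, is $[z_i, z_i + \log_p(1 + 1/p_i))$ and the point is $C + \varepsilon$; the hypothesis $0 \le z_i - Cx_i \le \varepsilon$ says $C \le (z_i)/x_i \le C + \varepsilon/x_i$, which should place $C+\varepsilon$ inside that window.

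First I would handle the upper bound. Since $z_i \le Cx_i + \varepsilon$, I get $p_i^{z_i} \le p_i^{Cx_i} \cdot p_i^{\varepsilon} = p^C \cdot p_i^\varepsilon$. Because $p_i \le p$ (in fact strictly, as $i \le 2n-1$), taking logs gives $\varepsilon \log p_i \le \varepsilon \log p = \log(1 + 1/p)$, so $p_i^\varepsilon \le 1 + 1/p$, and therefore $p_i^{z_i} \le p^C(1 + 1/p) = p^C + p^{C-1}$, as needed.

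Next I would handle the strict upper bound on $p^C + p^{C-1}$. From $z_i \ge Cx_i$ I get $p_i^{z_i} \ge p^C$, and hence
\[
  p_i^{z_i} + p_i^{z_i - 1} = p_i^{z_i}\Bigl(1 + \tfrac{1}{p_i}\Bigr) \ge p^C\Bigl(1 + \tfrac{1}{p_i}\Bigr).
\]
Because $p_i < p$ strictly (this is the only place strictness matters, and it is guaranteed by the index range $1 \le i \le 2n-1$ so that $p_i \le p_{2n-1} < p_{2n} = p$), we have $1/p_i > 1/p$ strictly, so $p^C(1 + 1/p_i) > p^C(1 + 1/p) = p^C + p^{C-1}$, giving the desired strict inequality.

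The argument is essentially computational with no real obstruction, but the one subtle point I want to flag is that the strictness in the second inequality relies on $p_i < p$, which is exactly why the outer proof of Theorem~\ref{thm:asymptote} chooses $2n$ primes and applies Dirichlet's theorem only to the first $2n-1$ of them while reserving $p = p_{2n}$ as the base; this is the structural reason the lemma is stated for $1 \le i \le 2n-1$ rather than $i = 2n$.
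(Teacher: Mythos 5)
Your proof is correct and follows essentially the same route as the paper's: both establish $p_i^{z_i} \le p^C p_i^{\varepsilon} \le p^C\left(1+\frac1p\right)$ for the lower endpoint and use $p^C \le p_i^{z_i}$ together with the strict inequality $1+\frac1p < 1+\frac1{p_i}$ (valid since $p_i < p$) for the upper endpoint. Your added observations --- that $p^C + p^{C-1} = p^{C+\varepsilon}$ exactly by the definition of $\varepsilon$, and that the strictness is the structural reason the lemma applies only to $i \le 2n-1$ --- are accurate but do not change the argument.
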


\begin{proof}
  We need to show $p_i^{z_i} \leq p^C + p^{C-1}$ and $p^C + p^{C-1} < p_i^{z_i} + p_i^{z_i-1}$.

  For the first inequality, we know $z_i \leq Cx_i + \varepsilon$, meaning that
  \[
    p_i^{z_i} \leq p_i^{Cx_i + \varepsilon} = p^C p_i^{\varepsilon}.
  \]
  Note as before that $\varepsilon < \frac{\log 
  \left(1 + \frac1p\right)}{\log {p_i}}$, meaning that $p_i^\varepsilon < 1 + \frac1p$ and thus $p_i^{z_i} \leq p^C\left(1 + \frac1p\right) = p^C + p^{C-1}$. So we have the first inequality.

  For the second inequality, we first use the fact that $Cx_i \leq z_i$, meaning in fact
  \[
    p^C = p_i^{Cx_i} \leq p_i^{z_i}.
  \]
  Multiplying both sides by $1 + \frac1p$, we have
  \[
    p^C + p^{C-1} \leq p_i^{z_i} \left(1 + \frac1p \right).
  \]
  However, we know that $1 + \frac1p < 1 + \frac{1}{p_i}$, meaning in fact $p^C + p^{C - 1} < p_i^{z_i}\left(1 + \frac1{p_i}\right) = p_i^{z_i} + p_i^{z_i-1}$, as desired.
\end{proof}

Now that we have both of these lemmas, notice that we have a set of $2n$ numbers, $x_i$, and integers $C$, $z_i$, with $0 \leq |Cx_i - z_i| \leq \varepsilon$ for all $i$. It must be the case that either $0 \leq Cx_i - z_i \leq \varepsilon$ for at least $n$ values of $i$, or $0 \leq z_i - Cx_i \leq \varepsilon$ for at least $n$ values of $i$. In the former case, Lemma~\ref{lem:intervalone} applies to each such $i$, and $\#I(p^C) \geq n$; otherwise Lemma~\ref{lem:intervaltwo} applies and $\#I(p^C + p^{C-1}) \geq n$.

Finally, we must take into account how large of a quantity $p^C$ or $p^C(1 + 1/p)$ could be. We know $C \leq \left\lceil \frac{1}{\varepsilon^{2n}} \right\rceil$ and $p = p_{2n+1}$; define $f(n)$ to be
\[
  f(n) = p_{2n+1}^{\frac{1}{\varepsilon^{2n}} + 1}(1 + 1/p_{2n+1}) = p_{2n+1}^{\left( \frac{\log(p_{2n+1})}{\log(1 + 1/p_{2n+1})}  \right)^{2n}}(1 + 1/p_{2n+1}).
\]
Clearly $f(n)$ is an upper bound on $p^C$ and $p^C + p^{C-1}$ for any such $n$, meaning $\#I(k) \geq n$ for some $k \leq f(n)$. Thus we focus on bounding the size of $f(n)$.

Once again for simplicity we write $p = p_{2n+1}$, despite that $p$ varies with $n$. Note that
\[
  \lg f(n) = \left(\frac{\log(p)}{\log(1 + 1/p)}\right)^{2n} \lg(p) + \lg(1 + 1/p) = \left(\frac{\log(p)}{\log(1 + 1/p)}\right)^{2n} \lg(p) (1 + o(1))
\]
and
\[
  \lg \lg f(n) = 2n \lg \left(\frac{\log(p)}{\log(1 + 1/p)}\right) + \lg \lg(p) + o(1).
\]
Note that $\log(1 + 1/p) \geq 1/(2p)$ so that
\begin{align*}
  \lg \lg f(n) &\leq 2n \lg ( 2p\log(p) ) + \lg \lg (p) + \lg(1 + o(1)) \\
  &= 2n(1 + \lg(p) + \lg(\log(p))) + \lg \lg (p) + \lg(1 + o(1)) \\
  &= 2n \lg(p) (1 + o(1)).
\end{align*}
Furthermore, $\lg(p) \sim \lg(n)$ as $p = p_{2n+1} \sim \frac{2n}{\log n}$. Thus we have $f(n) \leq 2n \lg(n) (1 + o(1))$, or $f(n) \leq 2^{2^{2n \lg(n) (1 + o(1))}}$, as desired. \qed

\begin{remark}
  Our best known \emph{lower} bound on how large $k$ must be to achieve $\#I(k) \geq n$ is only singly exponential; since $\#I(k) \leq \lg(k)$, we know $k \geq 2^n$.
\end{remark}

\subsection{Upper bounds on \texorpdfstring{$\liminf_{k \to \infty} \#I(k)$}{liminf \#I(k)}}
In this section, we show that there are infinitely many $k$ admitting very few Prime Power Swaps.

For this section, it will be useful to consider the setup of a few random variables. Fix some $a, b \in \mathbb{R}$ with $a < b$, and let $X \sim \mathcal{U}(a, b)$ be a uniform real-valued random variable. Let $K = \lfloor \exp(X) \rfloor$ so that $K$ is an integer random variable with $\lfloor \exp(a) \rfloor \leq K \leq \lfloor \exp(b) \rfloor$. Lastly, we let $R$ be the random variable counting the number of intervals $I_{p, c}$ that $K$ is a member of. We can write
\[
    R = \sum_{p \text{ prime}} R_p
\]
where $R_p = 1$ if $K \in I_{p, c}$ for some $c$, and $0$ otherwise.

We make the following observations.

\begin{lemma} \label{lem:loguniform}
    For a fixed prime $p$ and positive integer $c$, we have $K \in I_{p, c}$ if and only if $X \in \left[\log(p^c), \log(p^c) + \log\left(1+\frac{1}{p}\right)\right)$.
\end{lemma}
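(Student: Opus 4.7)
The plan is to unfold the definitions of $I_{p,c}$ and $K$, chase the inequalities through the (monotone) $\log$ and floor operations, and finally use a single logarithm identity to match the form stated in the lemma.

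Concretely, recall $I_{p,c} \cap \Z = \{p^c, p^c+1, \ldots, p^c + p^{c-1} - 1\}$, so $K \in I_{p,c}$ iff $p^c \leq K \leq p^c + p^{c-1} - 1$, i.e. iff $p^c \leq \lfloor \exp(X) \rfloor < p^c + p^{c-1}$. Both endpoints $p^c$ and $p^c + p^{c-1}$ are integers, so for any real $y$ we have $\lfloor y \rfloor \geq p^c$ iff $y \geq p^c$, and $\lfloor y \rfloor < p^c + p^{c-1}$ iff $y < p^c + p^{c-1}$. Applying this with $y = \exp(X)$ and taking $\log$ (which is strictly increasing, and here $\exp(X) > 0$ so the bounds are positive), the condition $K \in I_{p,c}$ is equivalent to
\[
    \log(p^c) \leq X < \log(p^c + p^{c-1}).
\]

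To finish, I would rewrite the upper endpoint: since $p^c + p^{c-1} = p^c(1 + 1/p)$, we have $\log(p^c + p^{c-1}) = \log(p^c) + \log(1 + 1/p)$. Substituting yields $X \in \left[\log(p^c), \log(p^c) + \log(1 + 1/p)\right)$, which is exactly the claim. Note that the equivalence goes both directions throughout, so the ``if and only if'' is automatic.

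There is essentially no obstacle here; the only subtlety is making sure the floor does not shift the open/closed endpoints, which works out precisely because both $p^c$ and $p^c + p^{c-1}$ are integers. No assumption on $X$ being uniform is needed for this lemma — it is a purely deterministic statement about the event $\{K \in I_{p,c}\}$ as a function of $X$.
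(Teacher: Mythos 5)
Your proposal is correct and follows essentially the same route as the paper: both arguments reduce the statement to the integrality of the endpoints $p^c$ and $p^c+p^{c-1}$ (so that the floor does not disturb the inequalities), apply the monotone $\log$, and use $\log(p^c+p^{c-1})=\log(p^c)+\log(1+1/p)$. The only cosmetic difference is that the paper writes out the two implications separately while you chain biconditionals; nothing substantive changes.
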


\begin{proof}
    Suppose $X \in \left[\log(p^c), \log(p^c) + \log\left(1+\frac{1}{p}\right)\right)$. Since $X \geq \log(p^c)$ then $K \geq \lfloor \exp(\log(p^c)) \rfloor = p^c$. Similarly, if $X < \log(p^c) + \log\left(1+\frac{1}{p}\right) = \log(p^c + p^{c-1})$, then since $\exp(\log(p^c + p^{c-1}))$ is an integer, $K = \lfloor\exp(X)\rfloor < p^c + p^{c-1}$.

    Similarly, suppose $K \in I_{p, c}$. Then $K \geq p^c$, meaning $\exp(X) \geq p^c$ so $X \geq \log(p^c)$. Furthermore, $K < p^c + p^{c-1}$, meaning $\exp(X) < p^c + p^{c-1}$, one again using the fact that $p^c + p^{c-1}$ is an integer. Thus $X < \log(p^c) + \log\left(1 + \frac1p\right)$.
\end{proof}

To make the following arguments less cumbersome, let $L_{p, c} = \left[c\log(p), c\log(p) + \log\left(1+\frac{1}{p}\right)\right)$. So $X \in L_{p,c} \Leftrightarrow K \in I_{p, c}$.

\begin{lemma} \label{lem:uniformintervals}
    Let $x, y \in \mathbb{R}$ be constants with $x > y \geq 0$, and let $L \subseteq \mathbb{R}$ be the set
    \[
        L = \bigcup_{c \in \Z} \left[ cx, cx + y \right)
    \]
    That is, $L$ is the union of evenly spaced, uniform-width intervals on the real line. Then, $\Pr[X \in L] \leq \frac{y}{x} + \frac{2y}{b-a}$.
\end{lemma}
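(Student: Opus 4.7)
The plan is to upper-bound the Lebesgue measure $|L \cap [a,b]|$ directly and then divide by $b - a$. Since $x > y \geq 0$, the intervals $[cx, cx+y)$ for different integers $c$ are pairwise disjoint, so I can write
\[
|L \cap [a,b]| = \sum_{c \in \Z} \bigl|[cx, cx+y) \cap [a,b]\bigr|,
\]
and each term in the sum is at most $y$. Thus it suffices to bound the number of integers $c$ for which $[cx, cx+y)$ actually meets $[a,b]$.

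Next I would observe that $[cx, cx+y) \cap [a,b] \neq \emptyset$ iff $cx < b$ and $cx + y > a$, which is equivalent to $c$ lying in the open interval $\bigl((a-y)/x,\; b/x\bigr)$. This interval has length $(b - a + y)/x$, and since $y < x$ we have $(b - a + y)/x < (b-a)/x + 1$, so the number of integers it contains is at most $\lfloor (b-a)/x \rfloor + 2 \leq (b-a)/x + 2$.

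Combining these two bounds gives
\[
|L \cap [a,b]| \;\leq\; y \cdot \left( \frac{b-a}{x} + 2 \right) \;=\; \frac{(b-a)\,y}{x} + 2y,
\]
and dividing by $b - a$ (the length of the support of $X$) yields the desired inequality $\Pr[X \in L] \leq y/x + 2y/(b-a)$.

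The only real subtlety is calibrating the counting step so that the additive boundary term comes out to exactly $2y/(b-a)$ rather than something like $3y/(b-a)$: one has to be careful that the open-interval length $(b-a+y)/x$ is strictly less than $(b-a)/x + 1$ (which uses $y < x$), so that the integer count is bounded by $(b-a)/x + 2$ and not $(b-a)/x + 3$. Beyond that, the argument is entirely a measure-theoretic inclusion-exclusion and should be routine.
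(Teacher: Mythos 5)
Your proof is correct and follows essentially the same route as the paper's: bound $\mu(L \cap [a,b])$ by $y$ times the number of integers $c$ whose interval meets $[a,b]$, show that count is at most $\frac{b-a}{x} + 2$ (using $y < x$), and divide by $b-a$. The only cosmetic difference is that you count integers in the interval $\bigl((a-y)/x, b/x\bigr)$ directly, while the paper phrases the same estimate via $c_{\max} - c_{\min} + 1$.
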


The proof of Lemma~\ref{lem:uniformintervals} is deferred to the appendix, but it is immediately applied to the following lemma.
\begin{lemma} \label{lem:rp_bound}
    Fix a prime $p$. Then, $\E[R_p] \leq \frac{\log\left(1 + \frac{1}{p}\right)}{\log(p)} + \frac{2\log\left(1 + \frac{1}{p}\right)}{b - a}$.
\end{lemma}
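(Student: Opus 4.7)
The plan is to reduce this bound to a direct application of Lemma~\ref{lem:uniformintervals}, since Lemma~\ref{lem:loguniform} already translates membership of $K$ in $I_{p,c}$ into membership of $X$ in an evenly-spaced union of intervals.

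First, I would observe that since $R_p$ is $\{0,1\}$-valued, $\E[R_p] = \Pr[R_p = 1] = \Pr[K \in I_{p,c} \text{ for some } c \in \Z^+]$. By Lemma~\ref{lem:loguniform}, this equals $\Pr\!\left[X \in \bigcup_{c \in \Z^+} L_{p,c}\right]$, where $L_{p,c} = \left[c\log(p), c\log(p) + \log\!\left(1+\tfrac{1}{p}\right)\right)$. Since $X$ is supported on $[a,b]$ with $a \geq 0$, including the nonpositive indices $c \leq 0$ in the union does not change this probability, so I can harmlessly extend the union to all $c \in \Z$, putting it in the exact form required by Lemma~\ref{lem:uniformintervals}.

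Next I would apply Lemma~\ref{lem:uniformintervals} with $x = \log(p)$ and $y = \log\!\left(1 + \tfrac{1}{p}\right)$. The hypotheses are easy to verify: $y \geq 0$ holds trivially, and $x > y$ holds because $p \geq 2 > 1 + \tfrac{1}{p}$ (so $\log p > \log(1 + 1/p)$). Plugging these values in directly yields the claimed bound
\[
  \E[R_p] \leq \frac{\log\!\left(1 + \tfrac{1}{p}\right)}{\log(p)} + \frac{2\log\!\left(1 + \tfrac{1}{p}\right)}{b - a}.
\]

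There is essentially no obstacle: the nontrivial content is already packaged inside Lemma~\ref{lem:uniformintervals} (whose proof is deferred), and all the preceding translation steps are bookkeeping. The only minor point to be careful about is justifying the harmless extension from $c \in \Z^+$ to $c \in \Z$; this follows because $X \geq a \geq 0$ forces $X \notin L_{p,c}$ for any $c \leq 0$ (as those intervals lie in $(-\infty, \log(1+1/p))$, and one may additionally restrict attention to $c \geq 1$ since $K \geq p^c$ forces $c \geq 1$ when $K \geq 2$).
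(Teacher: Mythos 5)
Your proposal is correct and follows essentially the same route as the paper: translate $R_p = 1$ into the event $X \in \bigcup_c L_{p,c}$ via Lemma~\ref{lem:loguniform}, then apply Lemma~\ref{lem:uniformintervals} with $x = \log(p)$ and $y = \log\left(1 + \frac{1}{p}\right)$. The paper simply notes that $\bigcup_{c \in \Z^+} L_{p,c}$ is a subset of the full $c \in \Z$ union (so the bound transfers by monotonicity), whereas you argue the extension is an equality on the support of $X$; either justification suffices.
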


\begin{proof}
    Consider the set $L_p = \bigcup_{c \in \Z^+} L_{p,c}$. This set is a subset of the $L$ defined in Lemma~\ref{lem:uniformintervals} with $x = \log(p)$ and $y = \log\left(1 + \frac{1}{p}\right)$. Thus, $\Pr[X \in L_p] \leq \frac{\log\left(1 + \frac{1}{p}\right)}{\log(p)} + \frac{2\log\left(1 + \frac{1}{p}\right)}{b - a}$. But notice that, by Lemma~\ref{lem:loguniform}, $X$ being in some $L_{p, c}$ is equivalent to $K$ being in some $I_{p, c}$. Thus, $R_p = 1$ if and only if $X \in L_p$, and otherwise $R_p = 0$. The claim follows.
\end{proof}

Now, we have the following theorem.
\begin{theorem}
    The number of prime power intervals $\#I(k)$ is at most $1$ infinitely often. That is,
    \[
        \liminf_{k \to \infty} \#I(k) \leq 1.
    \]
\end{theorem}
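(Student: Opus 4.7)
The goal is to show that for every threshold $N$, there exists an integer $k \geq N$ with $\#I(k) \leq 1$; infinitely many such $k$ then witness $\liminf_{k \to \infty} \#I(k) \leq 1$. I would exploit the random variable framework just set up: given $N$, choose $a = \log N$ and $b = 2a$, so that the random variable $K = \lfloor \exp(X) \rfloor$ always satisfies $K \geq N$. It then suffices to show that $\Pr[R \leq 1] > 0$, where $R = \#I(K)$, because any $K$ in the support of the uniform $X$ with $R(K) \leq 1$ is an integer $k \geq N$ with $\#I(k) \leq 1$.

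Because $R$ is a non-negative integer-valued random variable, Markov's inequality gives $\Pr[R \geq 2] \leq \mathbb{E}[R]/2$, so $\Pr[R \leq 1] > 0$ follows as soon as $\mathbb{E}[R] < 2$. By linearity of expectation and the observation that $R_p \equiv 0$ whenever $p > \exp(b)$ (since every $L_{p,c}$ then lies above $b$), Lemma~\ref{lem:rp_bound} yields
\[
  \mathbb{E}[R] \leq \sum_{p \leq \exp(b)} \frac{\log(1+1/p)}{\log p} + \frac{2}{b-a} \sum_{p \leq \exp(b)} \log\!\left(1 + \tfrac{1}{p}\right).
\]
By Mertens' theorem, $\sum_{p \leq \exp(b)} \log(1 + 1/p) = O(\log b) = O(\log a)$, so the error term is $O((\log a)/a) \to 0$ as $a \to \infty$. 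The main term is bounded by the convergent constant $S := \sum_{p} \log(1+1/p)/\log p$, and the argument reduces to establishing $S < 2$.

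To prove $S < 2$, I would compute the head of the series explicitly: summing over primes up to, say, $p = 47$ gives a partial sum slightly above $1.1$, already dominated by the $p=2$ term $\log(3/2)/\log 2 \approx 0.585$. For the tail, $\log(1 + 1/p) \leq 1/p$ gives
\[
  \sum_{p > P} \frac{\log(1 + 1/p)}{\log p} \leq \sum_{p > P} \frac{1}{p (\log p)^2} \cdot \log p,
\]
which by the prime number theorem behaves like $1/\log P$ and is easily made less than $0.5$ for modest $P$. Thus $S$ comes out comfortably below $2$ (indeed below roughly $1.5$), and for all sufficiently large $a$ we get $\mathbb{E}[R] < 2$, completing the proof.

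\textbf{Main obstacle.} The only nontrivial step is making the bound $S < 2$ fully rigorous rather than numerical. This requires pairing an explicit finite computation of a partial sum with an effective tail estimate (via, e.g., Dusart-style explicit bounds on $\pi(x)$ or Mertens' product). Everything else is a direct application of Lemma~\ref{lem:rp_bound} and Markov's inequality.
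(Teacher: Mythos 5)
Your argument is correct and lives in the same probabilistic framework as the paper's (the random variables $X$, $K$, $R = \sum_p R_p$ and Lemma~\ref{lem:rp_bound} applied via linearity of expectation), but you close it differently. The paper discards $p=2$ entirely, defines $R' = \sum_{p \geq 3} R_p$, and shows $\E[R'] < 1$ so that some $k$ has $R' = 0$ and hence $\#I(k) \leq R'(k) + R_2(k) \leq 1$; this forces the sharper numerical input $\sum_{p \geq 3} \frac{1}{p \log p} < 1$, i.e.\ knowing the constant $\sum_p \frac{1}{p\log p} \approx 1.6366$ to within about $0.08$ after subtracting the $p=2$ term $\frac{1}{2\log 2} \approx 0.7213$. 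You instead keep all primes and apply Markov's inequality at threshold $2$, needing only $\E[R] < 2$, which follows from the much looser bound $\sum_p \frac{\log(1+1/p)}{\log p} \leq \sum_p \frac{1}{p\log p} < 2$. Your route is numerically more forgiving (slack of $0.36$ rather than $0.08$), and the ``main obstacle'' you flag is in fact no obstacle at all: you can simply cite the known value of $\sum_p \frac{1}{p\log p}$ as the paper does, rather than assembling your own partial sum plus tail estimate. The paper's route gives marginally more information (the witness $k$ lies in no interval $I_{p,c}$ with $p \geq 3$), but for the stated theorem both arguments are complete; your handling of the window $[a,b] = [\log N, 2\log N]$ and of the error term $O(\log\log N / \log N)$ is also fine.
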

\begin{proof}
    Let $N$ be a large perfect square, and set $a = \frac12 \log N$ and $b = \log N$. Note that since $X \in [a, b]$ always, we have $K \leq \exp(b) = N$. This means that whatever the value of $K$ is, it cannot be in any intervals $I_{p, c}$ with $p > N$. So in fact,
    \[
        R = \sum_{\text{prime }p \leq N} R_p.
    \]
    Let $R' = \sum_{{\substack{p \text{ prime} \\ 3 \leq p \leq N}}} R_p$. For the purpose of this theorem, we will only consider the primes $p \geq 3$ and will ignore $p =2$; this is because we will be able to show $\E[R'] < 1$ but not $\E[R] < 1$. Now, we have
    \begin{align*}
        \E[R'] &= \sum_{\substack{p \text{ prime} \\ 3 \leq p \leq N}} \E[R_p] &\text{by linearity} \\
        &\leq \sum_{\substack{p \text{ prime} \\ 3 \leq p \leq N}} \left( \frac{\log\left(1 + \frac{1}{p}\right)}{\log(p)} + \frac{2\log\left(1 + \frac{1}{p}\right)}{\frac12 \log N} \right) &\text{by Lemma~\ref{lem:rp_bound}} \\
        &\leq \sum_{\substack{p \text{ prime} \\ 3 \leq p \leq N}} \left( \frac{1}{p\log(p)} + \frac{4}{p \log N} \right) &\text{since $\log(1+x) \leq x$} \\
        &= \sum_{\substack{p \text{ prime} \\ 3 \leq p \leq N}}  \frac{1}{p\log(p)} + \sum_{\substack{p \text{ prime} \\ 3 \leq p \leq N}} \frac{4}{p \log N} \\
        &= \sum_{\substack{p \text{ prime} \\ 3 \leq p \leq N}}  \frac{1}{p\log(p)} + \frac{4}{\log N}\sum_{\substack{p \text{ prime} \\ 3 \leq p \leq N}} \frac{1}{p}
    \end{align*}
    We bound the sums separately. For the first sum, note that the infinite version of the sum, with all the primes, converges to \cite{mathar2018digits}
    \[
        \sum_{p \text { prime}} \frac{1}{p \log p} = 1.636616323\dots < 1.637
    \]
    so that
    \[
        \sum_{\substack{p \text{ prime} \\ 3 \leq p \leq N}}  \frac{1}{p\log(p)} < \sum_{p \text { prime}} \frac{1}{p \log p} - \frac{1}{2 \log 2} < 0.92
    \]
    Now, for the second sum, we note that
    \[
        \sum_{\substack{p \text{ prime} \\ p \leq N}} \frac{1}{p} = \log\log(N) + O(1)
    \]
    so that
    \[
        \frac{4}{\log N} \sum_{\substack{p \text{ prime} \\ 3 \leq p \leq N}} \frac{1}{p} \leq \frac{4 \log \log (N) + O(1)}{\log N} < 0.01
    \]
    for a sufficiently large choice of $N$. Summing together, we see that $\E[R'] < 1$ for a sufficiently large $N$. Since $R'$ is a nonnegative integer, then, we know that there exists some choice $X = x \in [a, b]$ so that $R' = 0$. Thus, there is a value $K = k$, with $\sqrt{N} \leq k \leq N$, such $k \notin I_{p, c}$ for any primes $p \geq 3$. Thus $\#I(k) \leq 1$.

    Since we can increase $N$ without bound to obtain infinitely many such values of $k$, the claim follows.
\end{proof}

\section{Conjectures and future work} \label{sec:further}
This paper has shown that the notion of weakly consecutive sequence, while simple to define, leads to some intricate behaviors. Foremost among our conjectures is that the starting sequences and rules described in Section \ref{sec:rules} are sufficient to describe all WCS's. 

\begin{conjecture}\label{conj:main}
    Any WCS can be derived from one of the four starting sequences by applying the three rules. Moreover, for $k \geq 12$, the starting sequence and rule applications are unique up to reordering the rules.
\end{conjecture}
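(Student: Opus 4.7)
The plan is to prove Conjecture~\ref{conj:main} by strong induction on $k$, using Lemma~\ref{lem:characterization} to constrain the positions of multiples and using Lemma~\ref{lem:div_slice} to recursively reduce to shorter WCS whose structure is already classified by the inductive hypothesis. The base cases can be verified by exhaustive enumeration through $k = 11$, consistent with Table~\ref{tab:WCS}.

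For the inductive step, I would begin by locating small elements. Lemma~\ref{lem:characterization} at $m = 2$ shows that the even values occupy a single arithmetic progression of positions with common difference $2$; combined with the endpoint bound lemma following Lemma~\ref{lem:characterization}, this forces the progression to be either the even positions or the odd positions, and similar constraints at $m = 3, 5, \ldots$ determine the full coset structure modulo each small prime. Taking the $2$nd division slice $D_2(\sigma)$, which is itself a WCS of length $\lfloor k/2 \rfloor$ by Lemma~\ref{lem:div_slice}, the inductive hypothesis classifies its derivation; this pulls back to strong information about $\sigma$, namely which multiples of $2$ appear in which even positions. Iterating across the division slices $D_p(\sigma)$ for small primes $p \leq \sqrt{k}$ should pin down $\sigma$ on most of its domain.

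With the bulk of $\sigma$ fixed, I would analyze the remaining flexibility. The key observation is that a prime power $p^c \leq k$ is rigidly placed by $S_{p^c}^\sigma = T_{p^c}^\sigma$ \emph{unless} $k < p^c + p^{c-1}$: outside $I_{p, c}$, the arithmetic progression containing $p^c$ is forced into a unique position relative to the progression containing $p^{c-1}$, while inside $I_{p, c}$ the two progressions differ in length by exactly one, producing the Prime Power Swap of Rule~\ref{rul:prime_power_swaps} as the only freedom. A parallel analysis for twin primes $q, q+2$ in the upper half of $[k]$, where each has only a single occurrence, identifies the Twin Prime Swap of Rule~\ref{rul:germain} as the only additional freedom, contingent on $k$ satisfying the Sophie-Germain conditions that make Starting Sequence~\ref{start:germain} valid. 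Case-splitting on the arithmetic form of $k$ (generic; $k = p - 1$; $k \in \{2p, 2p+1\}$ with $p, p+2$ prime; $k = 2p - 1$ with $p, p+2, 2p+1$ all prime) should then exhaust the possibilities and uniquely match $\sigma$ to a starting sequence plus rule applications.

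The main obstacle will be the uniqueness portion for $k \geq 12$: showing that the orbits of the four starting sequences under the rule semigroup are pairwise disjoint up to the commutation already established. I would approach this via invariants such as $\sigma^{-1}(1)$ and $\sigma^{-1}(2)$, noting that among the three rules only the Prime Power Swap at $(p, c) = (k, 1)$ with $k$ prime and Trivial Reversal can alter $\sigma^{-1}(1)$, while $\sigma^{-1}(2)$ is likewise highly constrained. The threshold $k \geq 12$ appears to reflect small-$k$ coincidences in which the Twice Twin Prime or $1$-Inversion orbits happen to coincide with images of the Consecutive orbit under rules; once $k$ is large enough for the twin-prime and Sophie-Germain conditions to be strictly separated from $k$ itself being prime or a prime power, these coincidences should vanish. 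I expect this separation to follow from a finite check once the invariant framework is in place, but it is the step most likely to require delicate bookkeeping and is where the precise cutoff $k \geq 12$ needs to be honestly verified rather than estimated.
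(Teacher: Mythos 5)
The statement you are trying to prove is not proved in the paper at all: it is Conjecture~\ref{conj:main}, which the authors only verify by computer through $k = 500$. Indeed they remark that even the far weaker Conjecture~\ref{conj:1 at end} (every WCS has $1$ as its first or last term), which would be an immediate consequence, is something they ``have not been able to prove\ldots by working straight from the definition.'' Your submission is a research plan rather than a proof --- every decisive step is phrased as ``should pin down,'' ``should then exhaust,'' ``I expect'' --- so it cannot be accepted as establishing the statement. The central assertion you would need, namely that outside the window $I_{p,c}$ the position of $p^c$ is rigidly forced and that the three rules account for \emph{all} residual freedom, is exactly the hard content of the conjecture; asserting it as a ``key observation'' begs the question.

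There is also a concrete structural obstruction to the induction you propose. Lemma~\ref{lem:div_slice} does show $D_2(\sigma)$ is a WCS of length $\lfloor k/2 \rfloor$, and the inductive hypothesis would classify it as a starting sequence plus rule applications \emph{at that shorter length}. But this classification does not pull back to rule applications on $\sigma$: a Prime Power Swap in $D_2(\sigma)$ exchanging $p^c$ and $p^{c-1}$ (for odd $p$) corresponds in $\sigma$ to transposing the values $2p^c$ and $2p^{c-1}$, which is not one of the three rules, and conversely the applicability conditions for rules at length $\lfloor k/2\rfloor$ (e.g.\ $p^c \le \lfloor k/2\rfloor < p^c + p^{c-1}$) bear no direct relation to those at length $k$. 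So knowing the derivation of each slice neither certifies nor forbids a derivation of $\sigma$ itself, and the induction does not close. A smaller but real error: for odd $k$ the $m=2$ analysis does not force the even values into ``the even positions or the odd positions'' --- the arithmetic progression of $\lfloor k/2\rfloor$ positions with common difference $2$ can sit inside the odd positions in two distinct ways, so there are three candidate cosets, not two. Until the rigidity claim is actually proved and the slice-to-sequence pullback is repaired (or replaced), this remains an open conjecture, not a theorem.
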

We have verified this by computer calculation up to $k = 500$.  The uniqueness is false for $k \leq 11$. For example, when $k = 11$, there is a Twice Twin Prime sequence (starting sequence 3)
\[
  (1,\mathbf{10},3,4,\mathbf{7},6,\mathbf{5},8,9,\mathbf{2},11),
\]
but it can also be derived from the consecutive sequence by three Prime Power Swaps and a Trivial Reversal:
\begin{align*}
  (1,2,3,4,5,6,7,8,9,10,11)
  &\mapsto (\mathbf{11},2,3,4,5,6,7,8,9,10,\mathbf{1}) \\
  &\mapsto (11,2,\mathbf{9},4,5,6,7,8,\mathbf{3},10,1) \\
  &\mapsto (11,2,9,\mathbf{8},5,6,7,\mathbf{4},3,10,1) \\
  &\mapsto (1,10,3,4,7,6,5,8,9,2,11).
\end{align*}
If Conjecture \ref{conj:main} is true, it implies certain properties of WCS's by simply checking that they hold on the starting sequences and are preserved by the rules. A seemingly basic one is as follows:
\begin{conjecture}\label{conj:1 at end}
  Any WCS either begins or ends with the term $1$ (in other words, $\sigma(1) = 1$ or $\sigma(k) = 1$).
\end{conjecture}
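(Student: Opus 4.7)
The plan is to prove Conjecture~\ref{conj:1 at end} by strong induction on $k$, using Lemma~\ref{lem:div_slice} to descend to strictly smaller WCSs; the base case $k = 1$ is immediate.

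I would first establish an auxiliary ``divides'' lemma, easily obtained from Lemma~\ref{lem:characterization}: for any WCS $\sigma$ of length $k$, both $\sigma(1) \mid k$ and $\sigma(k) \mid k$. If $d = \sigma(1)$, then $1 \in T_d^\sigma = S_d^\sigma = \{1, 1+d, 1+2d, \dots\} \cap [k]$, an arithmetic progression of common difference $d$ with $\lceil k/d \rceil$ elements; this can equal $|T_d^\sigma| = \lfloor k/d \rfloor$ only when $d \mid k$.

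Now suppose for contradiction that $\sigma$ is a WCS of length $k$ with $\sigma^{-1}(1) \in [2, k-1]$, and set $u = \sigma(1) \geq 2$, $w = \sigma(k) \geq 2$. Any common divisor $d \geq 2$ of $u$ and $w$ would put both $1$ and $k$ in the AP $T_d^\sigma$, forcing $d \mid k - 1$; combined with $d \mid k$ this gives $d \mid 1$, absurd. So $\gcd(u, w) = 1$, and in particular $u, w < k$ (else the other would be forced to be $1$), so $k$ has at least two distinct prime factors. Applying Lemma~\ref{lem:div_slice} at $d = u$, the slice $D_u(\sigma)$ is a WCS of length $k/u < k$ with $D_u(\sigma)(1) = 1$. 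The inductive hypothesis on $D_u(\sigma)$ places the value $1$ uniquely at position $1$, so $D_u(\sigma)(k/u) \geq 2$; the divides lemma applied to $D_u(\sigma)$ then forces $\sigma(k - u + 1) = u e$ for some $e \mid k/u$ with $e \geq 2$. Symmetrically, $\sigma(w) = w f$ for some $f \mid k/w$ with $f \geq 2$.

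The main obstacle is closing out the contradiction from $u, w, e, f$. A clean easy subcase: if some prime $p$ dividing $e$ also divides $w$, then the forced structure $T_p^\sigma = \{p, 2p, \dots, k\}$, together with $k - u + 1 \in T_p^\sigma$, gives $p \mid u - 1$; so $p \mid \gcd(w, u - 1)$. When the primes of $k$ are partitioned between $u$ and $w$ and $u$ already carries the full $p$-part of $k$ for every $p \mid u$, every prime of $e$ must be of this form, and $\gcd(w, u - 1) = 1$ immediately forces $e = 1$, contradicting $e \geq 2$; symmetric reasoning handles $f$. In the residual case, where $e$ or $f$ can use primes in $C := \{p \mid k : p \nmid uw\}$ or primes in the leftover $p$-part of $u$, I would iterate: the value $ue$ is pinned to the last element of the AP $T_{ue}^\sigma$, so applying the same argument to $D_{ue}(\sigma)$ (a WCS of length $k/(ue) < k$) yields $\sigma(ue - u + 1) = u e g$ with $g \geq 2$ and $g \mid k/(ue)$, and continuing produces a strictly increasing chain $u < ue < ueg < \dots$ of divisors of $k$, plus a symmetric chain from the $w$-side. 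Coprimality of $u$ and $w$ propagates through these chains; the sought-for contradiction should then come from CRT-style residue clashes between the two chains, of exactly the flavor one sees by hand in the small cases $k \in \{6, 10, 12, 15, 21\}$ (e.g., a position being forced simultaneously into two incompatible residue classes modulo a prime $p \leq k$, which may or may not divide $k$). Making this final step run uniformly, rather than devolving into casework on the prime factorization of $k$, is the part of the proof I expect to take the most effort.
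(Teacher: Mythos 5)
This statement is not proved in the paper: it is stated as Conjecture~\ref{conj:1 at end}, and the authors explicitly remark that they ``have not been able to prove this conjecture by working straight from the definition.'' So there is no proof of record to compare against, and your proposal must stand or fall on its own. It does not stand: it is an incomplete attempt, and you say so yourself. The preliminary structure you build is correct and genuinely useful --- the ``divides'' lemma ($\sigma(1) \mid k$ and $\sigma(k) \mid k$, via comparing $|S_d^\sigma|$ with $|T_d^\sigma| = \lfloor k/d \rfloor$), the coprimality $\gcd(u,w)=1$ forced by $d \mid k$ and $d \mid k-1$, and the slice computations giving $\sigma(k-u+1) = ue$ with $e \mid k/u$, $e \geq 2$, and $\sigma(w) = wf$ with $f \mid k/w$, $f \geq 2$, all check out (I verified the index arithmetic in $D_u(\sigma)$ and $D_w(\sigma)$). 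These constraints do kill small cases like $k=6$ essentially immediately.

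The gap is the entire final step. Your ``clean easy subcase'' requires a conjunction of special hypotheses (every prime of $k$ divides $u$ or $w$, $u$ carries the full $p$-part of $k$ for each $p \mid u$, and $\gcd(w,u-1)=1$), and for the residual case you only gesture at ``CRT-style residue clashes'' between the two divisor chains $u < ue < ueg < \cdots$ and $w < wf < \cdots$. No contradiction is actually derived there; note that both chains terminate at the value $k$ and hence pin $\sigma^{-1}(k)$ twice, which is a strong constraint but not self-evidently inconsistent. Since this unproved step is exactly where the authors of the paper also got stuck, you should treat it as the open content of the conjecture rather than as a routine verification to be filled in later. As written, the proposal is a collection of correct necessary conditions on a hypothetical counterexample, not a proof.
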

Surprisingly, we have not been able to prove this conjecture by working straight from the definition. If true, then together with Lemma \ref{lem:div_slice}, it implies the following:
\begin{conjecture}\label{conj:m near end}
  Let $\sigma$ be a WCS. Then each $m < k/2$ appears as either one of the first $m$ or last $m$ terms of the sequence; in other words
  \[
    \sigma^{-1}(m) \in [1,m] \cup [k-m+1,k].
  \]
\end{conjecture}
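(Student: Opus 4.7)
The plan is a direct reduction to Conjecture \ref{conj:1 at end} via the division slice construction, exactly as hinted in the text. Fix $m < k/2$, let $\ell$ be the least index with $m \mid \sigma(\ell)$, and consider $\tau := D_m(\sigma)$, which by Lemma \ref{lem:div_slice} is a WCS on $[\lfloor k/m \rfloor]$. Since $m < k/2$ we have $\lfloor k/m \rfloor \geq 2$, so the ``begins or ends with $1$'' statement is nontrivial for $\tau$. The key observation is the identity $\tau(i) = \sigma(\ell + (i-1)m)/m$; in particular $\tau(i) = 1$ if and only if $\sigma(\ell + (i-1)m) = m$, i.e.\ if and only if $\sigma^{-1}(m)$ is the $i$th element of the set $T_m^\sigma$.

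Next I would record two easy consequences of Lemma \ref{lem:characterization}. By that lemma $T_m^\sigma = S_m^\sigma$ is an arithmetic progression of common difference $m$ and length $\lfloor k/m \rfloor$ containing every position at which $\sigma$ takes a multiple of $m$. Its minimum $\ell$ must satisfy $\ell \leq m$ (else $\ell - m \geq 1$ would be a smaller element of $S_m^\sigma = T_m^\sigma$), and dually its maximum $\ell + (\lfloor k/m \rfloor - 1)m$ must be at least $k - m + 1$. So the conjecture reduces to showing that $\sigma^{-1}(m)$ is at one of the two endpoints of this progression.

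Now applying Conjecture \ref{conj:1 at end} to $\tau$ gives $\tau(1) = 1$ or $\tau(\lfloor k/m \rfloor) = 1$. In the first case $\sigma(\ell) = m$, so $\sigma^{-1}(m) = \ell \in [1,m]$. In the second case $\sigma^{-1}(m) = \ell + (\lfloor k/m \rfloor - 1)m$ is the maximum of $T_m^\sigma$, hence $\geq k - m + 1$. Either way $\sigma^{-1}(m) \in [1,m] \cup [k-m+1,k]$, as claimed.

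The main obstacle is not this reduction at all, but Conjecture \ref{conj:1 at end} itself, which the paper explicitly flags as resistant to a direct attack from the definition. Once that conjecture is in hand, the argument above is a two-line corollary via Lemma \ref{lem:div_slice} and Lemma \ref{lem:characterization}. In fact the reduction shows the two conjectures are tightly linked: Conjecture \ref{conj:m near end} is exactly what one gets by applying the ``$1$ at an endpoint'' property simultaneously across every division slice $D_m(\sigma)$.
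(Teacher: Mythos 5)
Your reduction is correct and is exactly the route the paper intends: the text introduces this statement as a consequence of Conjecture \ref{conj:1 at end} together with Lemma \ref{lem:div_slice}, and your argument via the division slice $D_m(\sigma)$ and the endpoint bounds on $S_m^\sigma = T_m^\sigma$ from Lemma \ref{lem:characterization} fills in that implication correctly. As you note, the result remains conditional on Conjecture \ref{conj:1 at end}, which the paper leaves open, so this is a valid conditional proof rather than an unconditional one.
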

Also, we have an explanation for the patterns in Table \ref{tab:N(k)}:
\begin{conjecture}\label{conj:power of 2}
  The number $N(k)$ of WCS's is always a power of $2$ unless $k$ is of the form $2p - 1$ where $p$, $2p + 1$, and $p + 2$ are all primes. In the latter case, $N(k)$ is the sum of two powers of $2$.
\end{conjecture}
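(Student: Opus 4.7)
The plan is to assume Conjecture~\ref{conj:main} together with its uniqueness clause (valid for $k \geq 12$; the cases $k \leq 11$ can be verified by direct enumeration). For fixed $k$, let $H_k$ denote the elementary abelian $2$-group generated by the Trivial Reversal $R$ and the Prime Power Swaps $S_{p,c}$ applicable at length $k$: these are exactly the rules available from any of Starting Sequences 1, 2, 3. When $k = 2p-1$ with $p$, $p+2$, $2p+1$ all prime, let $G_k$ be the larger group obtained by adjoining the Twin Prime Swaps $G_q$ available from Starting Sequence~4. The commutation lemma from Section~\ref{sec:rules} makes each group well-defined abelian of order $2^{r_k}$ or $2^{r_k+t_k}$, where $r_k$ counts applicable PPS's plus the reversal and $t_k$ counts applicable TPS's.

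The first step is to prove that the group action is free on every starting sequence, so that each orbit has size exactly $|H_k|$ or $|G_k|$. Freeness follows by a position-tracking argument: each $S_{p,c}$ only moves the pair $\{p^{c-1}, p^c\}$, each $G_q$ only moves $\{q, q+2\}$, and $R$ globally reverses. No nontrivial product of involutions of these disjoint types can fix a starting sequence, using asymmetries such as the placement of the value $1$ at an endpoint. Together with the uniqueness clause (which makes orbits from distinct starting sequences disjoint), this decomposes $N(k)$ as a disjoint sum of orbits of sizes $2^{r_k}$ (from Sequences 1, 2, 3) and possibly $2^{r_k + t_k}$ (from Sequence 4).

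For $k$ not of Twin-Sophie-Germain type, Sequence~4 is unavailable and $N(k) = s_k \cdot 2^{r_k}$, where $s_k$ is the number of distinct $H_k$-orbits produced by the available Starting Sequences among 1, 2, 3. The core combinatorial claim is that $s_k \in \{1, 2\}$, always a power of $2$. The examples $k = 6$ and $k = 10$ show Sequences 2 and 3, when both available, share a common orbit distinct from Consecutive's (giving $s_k = 2$); the example $k = 11$ shows they can alternatively merge with Consecutive (giving $s_k = 1$); and $k = 12$ shows Sequence 2 alone can be separate from Consecutive (still $s_k = 2$). A promising route is to exhibit explicit elements of $H_k$ mapping between available starting sequences, indexed by the prime structure of $k$ and $k \pm 1$, and thereby show $s_k$ is always $1$ or $2$.

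Finally, when $k = 2p - 1$ is Twin-Sophie-Germain, Starting Sequence~4 produces an additional $G_k$-orbit of size $2^{r_k + t_k}$ disjoint from the $s_k \cdot 2^{r_k}$ WCS's accounted for above; thus $N(k) = s_k \cdot 2^{r_k} + 2^{r_k + t_k}$ is a sum of two powers of $2$ as claimed (collapsing back to a single power of $2$ precisely when $s_k = 2$ and $t_k = 1$, consistent with the conjecture's non-exceptions). The main obstacle is the orbit-pairing claim $s_k \in \{1, 2\}$ for non-Twin-Sophie-Germain $k$: one must enumerate subcases based on which primality conditions on $k$, $k+1$, and nearby values hold, and in each subcase explicitly construct rules mapping between available starting sequences. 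This reduces Conjecture~\ref{conj:power of 2} to a finite but technically delicate case analysis tied to the multiplicative structure of $k$ and its neighbors.
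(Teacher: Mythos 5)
First, a point of orientation: the statement you are proving is a \emph{conjecture} in the paper, and the paper offers no proof of it --- only the one-sentence remark that, granting Conjecture~\ref{conj:main}, it ``will be proved by observing that the rules can always be applied to starting sequences 1, 1 and 2, or 1 and 3.'' So your proposal is not being measured against an existing argument; it has to stand on its own, and as written it does not. The most concrete failure is your freeness claim. Take $k=5$: the applicable generators are $R$, $S_{2,2}$ (swap $4,2$) and $S_{5,1}$ (swap $5,1$), so your $H_5$ has order $8$, yet $N(5)=4$ and the single orbit of $(1,2,3,4,5)$ has size $4$, because $R\circ S_{2,2}\circ S_{5,1}$ fixes it: $S_{5,1}$ gives $(5,2,3,4,1)$, then $S_{2,2}$ gives $(5,4,3,2,1)=R(1,2,3,4,5)$. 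Your ``position-tracking / asymmetry of the value $1$'' argument cannot rule this out, precisely because $S_{p,1}$ moves the value $1$ and can cancel against $R$. (For $k\geq 12$ you could instead \emph{derive} freeness and orbit-disjointness directly from the uniqueness clause of Conjecture~\ref{conj:main} that you are already assuming --- two distinct reduced words sending a starting sequence to the same WCS would be two distinct derivations --- so this part is repairable, but not by the argument you propose.) A secondary structural issue: $G_q$ is only defined when $q,q+2$ sit in their Starting-Sequence-4 positions, so it is not a globally defined involution and does not literally commute with $R$; your group $G_k$ needs to be set up more carefully than ``adjoin the $G_q$.''

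The deeper gap is the claim $s_k\in\{1,2\}$, which you correctly identify as the crux but for which you offer no mechanism --- and which appears to be false as stated. For $k=22$, all three of Starting Sequences 1, 2, 3 are available ($23$ is prime, and $22=2\cdot 11$ with $11,13$ twin primes), while the only applicable rules are $R$ and $S_{2,4}$, so $2^{r_{22}}=4$; yet Table~\ref{tab:N(k)} gives $N(22)=16=4\cdot 2^{r_{22}}$, not $1\cdot 4$ or $2\cdot 4$. The same happens at $k=82$, where $N(82)=32=4\cdot 2^{r_{82}}$. So either the orbit count is $4$ (not in $\{1,2\}$, though still a power of $2$), or the orbits do not all have size $2^{r_k}$; either way your decomposition $N(k)=s_k\cdot 2^{r_k}$ with $s_k\in\{1,2\}$ contradicts the paper's own data, and your supporting examples ($k=6,10,11$) all live in the $k\leq 11$ range where the uniqueness clause is explicitly false and hence tell you nothing about the regime you need. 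Until you (i) replace the freeness argument by an appeal to uniqueness restricted to $k\geq 12$, and (ii) correctly determine, as a function of the primality pattern of $k$ and its neighbors, how many orbits the available starting sequences generate and verify that this count is always a power of $2$, the proposal is a plausible framework but not a proof, even conditionally on Conjecture~\ref{conj:main}.
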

If Conjecture \ref{conj:main} is true, then Conjecture \ref{conj:power of 2} will be proved by observing that the rules can always be applied to starting sequences 1, 1 and 2, or 1 and 3 (starting sequences 2 and 3 never occur for the same $k$. When Starting Sequence 4 appears, it generates a possibly richer family of swaps.

Recall that \emph{Dickson's conjecture} \cite{Dickson,Prime,BatemanHorn} states that any finite set of linear forms
\[
  a_1 + b_1 x, \ldots, a_r + b_r x, \quad a_i, b_i \in \mathbb{Z}
\]
are simultaneously prime for infinitely many positive integers $x$, in the absence of a congruence obstruction to this.

If Dickson's conjecture is true then Starting Sequence 4 occurs infinitely often. When Starting Sequence 4 appears, then Rule \ref{rul:germain} is activated. If, moreover, the first Hardy–Littlewood conjecture is true (\cite{HardyLittlewood}; \cite[\textsection A9]{Guy}, then the number of twin prime pairs in $[k/2, k]$ grows like a constant times $k/\lg^2 k$, giving
\[
  N(k) \geq 2^{c k/\lg^2 k}.
\]
By contrast, using the other rules, we can get at most $O(k)$ WCS's since $I(k) \leq \lg k$. So the $N(k)$-values arising from Starting Sequence 4 are expected to be of \emph{much} larger order than average, a trend only beginning to be seen in Table \ref{tab:N(k)}. We are thus led to conjecture:
\begin{conjecture}
  If all WCS's are listed in order of increasing length $k$, then asymptotically 100\% of them are derived from Starting Sequence 4.
\end{conjecture}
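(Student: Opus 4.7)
My plan is to prove this conditional on Conjecture~\ref{conj:main} (so that ``derived from Starting Sequence~4'' is well-defined for all sufficiently large $k$), Dickson's conjecture, and the first Hardy--Littlewood conjecture, all three of which the excerpt already invokes in the preceding discussion. Write $W(K) = \sum_{k \leq K} N(k)$ and decompose $W(K) = W_4(K) + W_{\neq 4}(K)$ according to whether the (essentially unique, by Conjecture~\ref{conj:main}) derivation of each WCS starts from Starting Sequence~4 or from one of Starting Sequences~1--3. It suffices to show $W_{\neq 4}(K) = o(W_4(K))$.

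First I would establish a polynomial upper bound on $W_{\neq 4}(K)$. For each $k$ there are at most three starting sequences of types~1--3 available (Consecutive always; $1$-Inversion only when $k+1$ is prime; Twice Twin Prime only for the special $k$). Since Rule~\ref{rul:germain} requires Starting Sequence~4, the only operations applicable here are Trivial Reversal and Prime Power Swaps, producing at most $2 \cdot 2^{\#I(k)}$ sequences per starting sequence. Using $\#I(k) \leq \lg k$, this gives $O(k)$ non-SS-4 WCS's at each length $k$, and summing yields $W_{\neq 4}(K) = O(K^2)$.

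Next I would obtain a superpolynomial lower bound on $W_4(K)$. The Bateman--Horn conjecture applied to the three linear forms $x$, $x+2$, $2x+1$ predicts that the number of primes $p \leq K/2$ with $p+2$ and $2p+1$ also prime grows like $c_1 K / \lg^3 K$; in particular, for $K$ sufficiently large there is always some $k = 2p - 1 \in [K/2, K]$ admitting Starting Sequence~4. For such a $k$, Rule~\ref{rul:germain} provides an independent binary choice on each twin prime pair $(q, q+2)$ with $\lceil k/2 \rceil < q \leq k$, and the first Hardy--Littlewood conjecture furnishes $\Theta(k / \lg^2 k)$ such pairs. Hence
\[
  W_4(K) \;\geq\; 2^{\Omega(K / \lg^2 K)},
\]
which dominates the $O(K^2)$ bound on $W_{\neq 4}(K)$ and yields the desired asymptotic ratio.

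The main obstacle is not the counting itself but the number-theoretic input: Dickson is needed even to know Starting Sequence~4 occurs for arbitrarily large $k$, a quantitative Bateman--Horn-type density is what forces a qualifying $k$ into every window $[K/2, K]$, and Hardy--Littlewood supplies the linear twin-prime count in that window. The argument is thus necessarily conditional on these deep open conjectures. A secondary subtlety is that the uniqueness clause of Conjecture~\ref{conj:main} is what makes ``derived from Starting Sequence~4'' an unambiguous classification; without it one would instead state and prove, via the same estimates, the slightly weaker assertion that asymptotically $100\%$ of WCS's admit \emph{some} derivation from Starting Sequence~4.
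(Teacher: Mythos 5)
You should note at the outset that the statement you were given is a \emph{Conjecture} in the paper: the authors offer no proof of it, only the heuristic in the paragraph immediately preceding it (Dickson's conjecture to make Starting Sequence 4 occur infinitely often, the first Hardy--Littlewood conjecture to get $N(k) \geq 2^{ck/\lg^2 k}$ for such $k$, and the bound $\#I(k) \leq \lg k$ to cap the contribution of the other rules at $O(k)$ per length). Your proposal is essentially that same heuristic organized into a conditional argument: your $O(K^2)$ bound on $W_{\neq 4}(K)$ and your $2^{\Omega(K/\lg^2 K)}$ bound on $W_4(K)$ correspond exactly to the paper's two displayed estimates, and you correctly identify the three external inputs --- Conjecture~\ref{conj:main} so that ``derived from Starting Sequence 4'' is a well-defined classification, a Bateman--Horn-type density for the forms $x$, $x+2$, $2x+1$ (which indeed have no congruence obstruction, e.g.\ $x \equiv 2 \pmod 3$) to force a qualifying $k$ into every window $[K/2, K]$, and Hardy--Littlewood for the count of swappable twin pairs via Rule~\ref{rul:germain}. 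So this cannot be graded as a proof --- it is unavoidably conditional on deep open conjectures, as you acknowledge --- but as a formalization it is faithful to the paper's reasoning, and your one genuine addition is the explicit summation over $k \leq K$ needed to pass from the paper's pointwise comparison of orders of magnitude to the stated claim about $100\%$ of all WCS's listed by length.
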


On the other hand, for 100\% of $k$, none of Starting Sequences 2, 3, 4 apply, and if Conjecture \ref{conj:main} holds, then $N(k) = 2^{n+1}$ where $n$ is the number of applicable Prime Power Swaps. It is reasonable to believe that the primes behave independently, leading to the following expectation:
\begin{conjecture}
    The value $N(k) = 2$ (minimal for $k > 1$) occurs for infinitely many $k$'s. Specifically, the density of $k$'s such that $N(k) = 2$ is given by
    \[
      \delta = \prod_{p \text{ prime}} \left(1 - \frac{\log\left(1 + \frac{1}{p}\right)}{\log(p)}\right) \approx 0.19.
    \]
\end{conjecture}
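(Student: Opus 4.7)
The plan is to reduce the conjecture to a density computation about $\#I(k)$, then invoke equidistribution on a torus. Under Conjecture~\ref{conj:main}, $N(k) = 2$ holds exactly when no rule beyond Trivial Reversal produces a new WCS, i.e., when $\#I(k) = 0$ and none of Starting Sequences~2, 3, or 4 is available. The exceptional starting-sequence cases force $k$ to be of a prescribed form in terms of primes (namely $p-1$ with $p$ prime, $2p$ or $2p+1$ for a twin-prime pair, or $2p-1$ for a twin Sophie--Germain configuration), and each such family has logarithmic density $0$ by the prime number theorem and Brun's sieve. So up to density zero, $N(k) = 2$ iff $\#I(k) = 0$, and it suffices to compute the logarithmic density of $\{k : \#I(k) = 0\}$.

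I would next pass to the logarithmic variable $x = \log k$. By the argument of Lemma~\ref{lem:loguniform}, $k \in I_{p,c}$ for some $c \geq 1$ iff $\{x/\log p\} \in [0,\ \log(1 + 1/p)/\log p)$, so $\#I(k) = 0$ corresponds, in $x$-coordinates, to avoiding this interval (modulo $1$) in the coordinate $x/\log p$ for every prime $p$ simultaneously. For any finite set $P$ of primes, I would compute the logarithmic density of $k$'s avoiding every $I_{p,c}$ for $p \in P$ as the Lebesgue density of the corresponding set in $x$-space, namely $\prod_{p \in P}\bigl(1 - \log(1+1/p)/\log p\bigr)$, by Weyl equidistribution on the torus $\prod_{p \in P}\mathbb{R}/(\log p)\mathbb{Z}$ under the straight-line flow $x \mapsto (x \bmod \log p)_{p \in P}$.

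The main obstacle is that this use of Weyl's theorem requires the numbers $\{1/\log p : p \in P\}$ to be $\mathbb{Q}$-linearly independent. Pairwise this is immediate: if $\log p/\log q$ were rational, say $a/b$ with positive integers $a, b$, then $p^b = q^a$, contradicting unique factorization. For $|P| \geq 3$, however, $\mathbb{Q}$-linear independence of $\{1/\log p\}_{p \in P}$ is equivalent to the nonvanishing of certain integer-coefficient polynomials in $\{\log p : p \in P\}$---a transcendence statement related to (and apparently weaker than) Schanuel's conjecture, and not known unconditionally. The proof is therefore, at best, conditional on this number-theoretic hypothesis, unless one can replace the Weyl input by an analytic or sieve argument---for instance via a Dirichlet series with Euler factors built from $\log(1 + 1/p)/\log p$---that yields the product density directly, sidestepping full equidistribution.

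Assuming the independence is granted, passing from finite $P$ to the infinite product is routine: the tail $\sum_{p > P} \log(1 + 1/p)/\log p \leq \sum_{p > P} 1/(p\log p)$ converges, so truncating at $P$ contributes an error tending to $0$ as $P \to \infty$. In the limit, the logarithmic density of $\{k : \#I(k) = 0\}$ equals the infinite product $\delta$, which is strictly positive; the claim that $N(k) = 2$ occurs infinitely often follows at once.
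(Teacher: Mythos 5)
This statement is labeled a \emph{conjecture} in the paper, and the paper offers no proof of it --- only the one-sentence heuristic that ``the primes behave independently.'' Your proposal is essentially a careful, correct unpacking of that heuristic (and you rightly work with logarithmic rather than natural density: the set $\bigcup_c I_{p,c}$ has no natural density, since the intervals $I_{p,c}$ have lengths growing geometrically, so the Euler product can only be a logarithmic density). But as you yourself acknowledge, what you have written is not a proof; it is doubly conditional. First, the reduction of ``$N(k)=2$'' to ``$\#I(k)=0$ and no exceptional starting sequence'' rests on Conjecture~\ref{conj:main}, which is itself open (verified only up to $k=500$). Second, and more seriously, the Weyl equidistribution step requires that $\{1/\log p : p \in P\}$ admit no nontrivial integer linear relation; for $|P| \geq 3$ this is an open transcendence problem (it would follow from the algebraic independence of logarithms of primes, hence from Schanuel's conjecture), and you correctly identify that pairwise independence via unique factorization does not suffice.

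It is worth measuring your plan against what the paper \emph{can} do unconditionally. Section~\ref{sec:primepowerswaps} only proves $\liminf_k \#I(k) \leq 1$ by a first-moment argument, and explicitly notes that including the prime $2$ pushes $\E[R]$ above $1$, so even the qualitative statement ``$\#I(k)=0$ for infinitely many $k$'' --- the weakest consequence of the conjecture --- is not obtained there, and your equidistribution route does not recover it either without the independence hypothesis. Your closing suggestion of replacing Weyl's theorem by a sieve or Dirichlet-series argument is the right instinct for making progress, but a straightforward inclusion--exclusion over the congruence-like conditions ``$x \bmod \log p \in [0,\log(1+1/p))$'' runs into exactly the same correlation question: without knowing that the flows on the circles $\mathbb{R}/(\log p)\mathbb{Z}$ are jointly equidistributed, the cross terms in the second and higher moments cannot be evaluated. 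So the gap is genuine and, at present, appears to be of transcendence-theoretic rather than combinatorial origin; your write-up is a legitimate conditional argument and a correct diagnosis of the obstruction, not a proof.
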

The same heuristics suggest that each value $N(k) = 2^{n+1}$ should occur a positive proportion of the time.

\appendix
\section{Deferred proofs}
\begin{proof}[Proof of Lemma~\ref{lem:uniformintervals}]
    Let $X \sim \mathcal{U}(a, b)$ be a uniform random variable. Let $x, y \in \mathbb{R}$ be constants with $x > y \geq 0$, and let $L \subseteq \mathbb{R}$ be the set
    \[
        L = \bigcup_{c \in \Z} \left[ cx, cx + y \right)
    \]
    Since $X$ is a uniform random variable, we have
    \[
        \Pr[X \in L] = \frac{\mu(L \cap [a, b])}{\mu(a, b)} = \frac{\mu(L \cap [a, b])}{b - a},
    \]
    where $\mu$ denotes the Lebesgue measure. Note that
    \[
        \mu(L \cap [a, b]) \leq y \cdot | \{ c \in \Z : [cx, cx + y) \cap [a, b] \neq \emptyset \} |,
    \]
    since each interval $[cx, cx + y) \cap [a, b]$ has measure at most $y$. Let $c_{\min}$ and $c_{\max}$ denote the minimum and maximum value, respectively, of $c \in \Z$ where $[cx, cx + y)$ has a non-empty intersection with $[a, b]$. We know $c_{\max} x \leq b$ and $c_{\min} x + y \geq a$, and furthermore, the number of values of $c$ with intersecting intervals is $c_{\max} - c_{\min} + 1$. Applying to the above inequality, we have
    \begin{align*}
        \mu(L \cap [a, b]) &\leq y \cdot (c_{\max} - c_{\min} + 1) \leq y \left(\frac{b}{x} - \frac{a-y}{x} + 1 \right) \\
        &= y \cdot \frac{b-a}{x} + y\left(1 + \frac{y}{x}\right) \leq y \cdot \frac{b-a}{x} + 2y.
    \end{align*}
    Thus, we have
    \[
        \Pr[X \in L] \leq \frac{y \cdot \frac{b-a}{x} + 2y}{b-a} = \frac{y}{x} + \frac{2y}{b-a},
    \]
    as desired.
\end{proof}
\section{Acknowledgements}
We would like to thank our advisors Evan O'Dorney, Prasad Tetali, and Aliaksei Semchankau. We also want to thank everyone we've talked to about this problem, including but not limited to Tolson Bell, Soumil Mukherjee, Jason Chadwick, Arpita Nag, and Anna Dietrich. 
\bibliography{WCS}
\bibliographystyle{plain}
\end{document}